\documentclass[11pt,a4paper,reqno]{amsart}
\usepackage{vmargin}
\usepackage[latin1]{inputenc}
\usepackage{amsmath,amsfonts,amsthm,epsfig,color}

\setcounter{tocdepth}{1}

\def\R{\mathbb R}

\def\cal{\mathcal}
\def\H{{\cal H}}

\def\a{\alpha}

\def\de{\delta}
\def\e{\varepsilon}

\def\om{\omega}

\def\vphi{\varphi}

\def\ov{\overline}

\def\pa{\partial}

\DeclareMathOperator*{\diam}{diam}

\newtheorem{theorem}{Theorem}
\newtheorem{example}[theorem]{Example}
\newtheorem{lemma}[theorem]{Lemma}
\newtheorem{remark}[theorem]{Remark}
\newtheorem*{theorem*}{Theorem}

\newcommand{\sca}{_{sc}}

\numberwithin{equation}{section}

\pagestyle{plain}

\begin{document}

\title{Quantitative stability in the isodiametric inequality via the isoperimetric inequality}
\author{F. Maggi, M. Ponsiglione \& A. Pratelli}

\begin{abstract}
The isodiametric inequality is derived from the isoperimetric inequality trough a variational principle, establishing that balls maximize the perimeter among convex sets with fixed diameter. This principle brings also  quantitative improvements to the isodiametric inequality,
shown to be sharp by explicit nearly optimal sets.

\end{abstract}

\maketitle

\section{Introduction} The {\it isodiametric inequality} is the analytical formulation of the basic variational principle that balls maximize volume under a diameter constraint. If $B^n=B$ denotes the unit ball in the Euclidean space $\R^n$, $n\ge 1$, and $|E|$ is the Lebesgue measure of a bounded set $E\subset\R^n$ with
diameter $\diam(E)$, then the  isodiametric inequality takes the form
\begin{equation}\label{isodiametric inequality}
\left(\frac{\diam(E)}2\right)^n|B|\ge|E|\,,
\end{equation}
where equality holds if and only if $E$ is (equivalent to) a ball. The aim of this paper is to provide some sharp and natural results about the stability of balls as maximizers in the isodiametric variational problem. This amounts in estimating suitable notions of distance of  $E$ from the family of balls in terms of its {\it isodiametric deficit},
\[
\de(E)=\left(\frac{\diam(E)}2\right)^n\frac{|B|}{|E|}-1\,.
\]
The isodiametric deficit $\de$ is invariant by scaling, rigid motions, and it is non-negative, with $\de(E)=0$  if and only if $E$ is equivalent to a ball.

\subsection*{The stability results} We now state our stability results. We shall assume (without loss of generality) that $\diam(E)=2$, and we shall directly focus on the case $n\ge 2$ to avoid trivialities. Our first result concerns a quantitative improvement of~\eqref{isodiametric inequality} involving the $L^1$-distance of $E$ from the family of balls.  We set the following notation:
\begin{align*}
B_r(x)=\{y\in\R^n:|y-x|<r\}\,,&& B_r=B_r(0)\,,&& B(x) = B_1(x)\,,&& B=B_1(0)\,.
\end{align*}

\begin{theorem}\label{thm: main}
If $E\subset\R^n$ is a set with $\diam(E)=2$, then there exists $x\in\R^n$ such that
\begin{equation}\label{main1}
    C(n)\de(E)^{1/2} \ge \frac{|E\Delta B(x)|}{|B|}\,,
\end{equation}
where $C(n)$ is a constant depending only on $n$.
\end{theorem}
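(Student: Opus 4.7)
The strategy suggested by the paper's title is to reduce the isodiametric question to an isoperimetric one. The crucial intermediate fact is the variational principle stated in the abstract: every convex body $K\subset\R^n$ of diameter at most $2$ satisfies $P(K)\le P(B)=n|B|$. Once this is available, the quantitative isoperimetric inequality of Fusco--Maggi--Pratelli controls the Fraenkel asymmetry $\a(K)$ by a constant times $\de(K)^{1/2}$, and the passage from Fraenkel asymmetry to the $L^1$-distance $|E\Delta B(x)|$ is routine bookkeeping.

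I would first reduce to the case in which $E$ is convex and $\de(E)\le 1$, as~\eqref{main1} is trivial when $\de(E)$ is bounded away from $0$ (the left-hand side is then bounded below, while the right-hand side is at most $2$). For convexification, set $K=\overline{\mathrm{conv}}(E)$: then $\diam(K)=\diam(E)=2$ and $|E|\le|K|\le|B|$, so $\de(K)\le\de(E)$ and
\[
|K\setminus E|=|K|-|E|\le|B|-|E|=\frac{|B|\,\de(E)}{1+\de(E)}\le|B|\,\de(E).
\]
Hence it suffices to prove the asserted bound with $K$ in place of $E$, since the extra term $|K\setminus E|$ is $O(\de(E))\le O(\de(E)^{1/2})$ when $\de(E)\le 1$.

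Next I would establish $P(K)\le n|B|$ via Cauchy's surface-area formula, which expresses $P(K)$ as a dimensional constant times the average of ${\H}^{n-1}(\pi_{\nu^\perp}(K))$ over $\nu\in\Sn$. Each projection $\pi_{\nu^\perp}(K)$ is a convex subset of a hyperplane with diameter at most $\diam(K)=2$, so the $(n-1)$-dimensional isodiametric inequality~\eqref{isodiametric inequality} gives ${\H}^{n-1}(\pi_{\nu^\perp}(K))\le|B^{n-1}|$; averaging and comparing with the equality case $K=B$ (where every projection saturates the bound) yields $P(K)\le n|B|$. Plugging this into the quantitative isoperimetric inequality,
\[
c(n)\,\a(K)^2\,|K|^{(n-1)/n}\le P(K)-n|B|^{1/n}|K|^{(n-1)/n}\le n|B|^{1/n}\bigl(|B|^{(n-1)/n}-|K|^{(n-1)/n}\bigr),
\]
substituting $|K|=|B|/(1+\de(K))$, and using $(1+\de(K))^{(n-1)/n}-1\le\de(K)$, one reads off $\a(K)^2\le C(n)\,\de(E)$. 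Choosing $x$ nearly optimal in the definition of $\a(K)$ gives $|K\Delta B_r(x)|\le C|B|\,\de(E)^{1/2}$ with $r^n=|K|/|B|$; and since $|B(x)\Delta B_r(x)|=|B|-|B_r|\le|B|\,\de(E)$, the triangle inequality combined with the estimate of $|K\setminus E|$ from the previous paragraph gives~\eqref{main1}.

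The main obstacle is the variational principle $P(K)\le n|B|$ itself: this is the essential geometric content, trading a diameter constraint for a perimeter bound and thereby linking two a priori distinct isoperimetric-type problems. The projection-formula proof sketched above is clean but has an inductive character, invoking the isodiametric inequality in dimension $n-1$; the one-dimensional base case is trivial. After this key step, the remainder---the quantitative isoperimetric inequality, the convex-hull reduction, and the adjustment from $B_r(x)$ to $B(x)$---is standard bookkeeping.
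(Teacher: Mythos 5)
Your proposal is correct and follows essentially the same route as the paper: convexify $E$, prove $P(K)\le P(B)$ via Cauchy's formula and the $(n-1)$-dimensional isodiametric inequality, feed the resulting bound on the isoperimetric deficit of $K$ into the Fusco--Maggi--Pratelli quantitative isoperimetric inequality, and recover \eqref{main1} by triangle-inequality bookkeeping (adjusting from $B_r(x)$ to $B(x)$ and from $K$ back to $E$), with the case $\de(E)\ge 1$ handled trivially. The only cosmetic difference is that the paper packages the deficit transfer as a separate lemma ($\de'(F)\le\de(E)$) rather than bounding the perimeter deficit inline as you do.
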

Notice that~\eqref{main1} is equivalent to
\begin{equation}\label{main2}
|B|\ge|E|\left\{1+\frac1{C(n)^2}\,\left(\frac{|E\Delta B(x)|}{|B|}\right)^2\right\}\,.
\end{equation}
Moreover, a possible value for the constant $C(n)$ in~\eqref{main1} and~\eqref{main2} is the following (where $n':=n/(n-1)$)
\[
C(n)=\frac{181\,n^3}{(2- 2^{1/n'})^{3/2}}+1\,.
\]
Next, we look for uniform bounds on the distance between $E$ and a ball of equal diameter. More precisely, given a set $E$ with $\diam(E)=2$, we shall introduce the radii
\[\begin{split}
r_{E}^{\text{out}}&=\inf_{x\in\R^n}\inf\big\{r>0:E \subset B_{1+r}(x)\big\}\,,\\
r_{E}^{\text{in}}&=\inf_{x\in\R^n}\inf\big\{r>0:B(x)\subset E + B_r\big\}\,.
\end{split}\]
Of course, one has that $r_E^{\text{out}}=0$ and $r_E^{\text{in}}=0$ if and only if $E$ is the unit ball, hence, if and only if $\delta(E)=0$. Our second stability estimate shows that $r_E^{\text{out}}$ and $r_E^{\text{in}}$ can be bounded from above by suitable powers of $\delta(E)$.

\begin{theorem}
\label{thm: main2}  Let $E\subset\R^n$ with $\diam(E)=2$. There exists two positive constants $K_{\text{in}}(n)$ and $K_{\text{out}}(n)$ such that
\begin{equation}\label{newfn}
r_{E}^{\text{out}}\leq \left\{\begin{array}{ll}
K_{\text{out}}(2)\de(E)^{1/2} &\hbox{if $n=2$}\,,\\
K_{\text{out}}(3) \Big(\de(E) \max\left\{|\log \de(E)|,1\right\}\Big)^{1/2} \qquad &\hbox{if $n=3$}\,,\\[8pt]
K_{\text{out}}(n)\de(E)^{2/(n+1)} \qquad &\hbox{if $n>3$}\,.
\end{array}\right.
\end{equation}
and
\begin{equation}\label{fnin}
r_{E}^{\text{in}} \leq K_{\text{in}}(n)\de(E)^{1/n}\,.
\end{equation}
\end{theorem}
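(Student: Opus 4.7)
My plan is: reduce to convex $E$ via the convex hull (since $\operatorname{conv}(E)\supset E$, $\diam(\operatorname{conv}(E))=\diam(E)$, and $\delta(\operatorname{conv}(E))\leq\delta(E)$), prove the outer-radius bound \eqref{newfn} using a spherical-cap argument driven by the diameter constraint, and deduce the inner-radius bound \eqref{fnin} from the outer one via a ``ball-insertion'' argument based on the isodiametric inequality itself.

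\emph{Outer radius.} Assume $E$ convex, let $x_0$ be its Chebyshev center, and $R:=r_{E}^{\text{out}}$, so $E\subset\overline{B_{1+R}(x_0)}$ with contact points $y_1,\dots,y_k\in\partial E\cap\partial B_{1+R}(x_0)$ such that $x_0\in\operatorname{conv}(y_i)$. The diameter constraint forces $E\subset\bigcap_i\overline{B_2(y_i)}$, so each cap $\Sigma_i:=\overline{B_{1+R}(x_0)}\setminus\overline{B_2(y_i)}$ is disjoint from $E$; a direct computation gives $|\Sigma_i|\geq c_n R^{(n+1)/2}$ for small $R$. The delicate step is converting this cap defect into a lower bound on $\delta$: one bounds $|E|$ above by carefully combining $|E|\leq|\overline{B_{1+R}(x_0)}\cap\bigcap_i\overline{B_2(y_i)}|$ with the isodiametric constraint $|E|\leq|B|$, exploiting the Chebyshev configuration of the $y_i$'s. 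In dimensions $n\geq 4$ one obtains $\delta\geq c_n R^{(n+1)/2}$, whence $R\leq K_{\text{out}}(n)\delta^{2/(n+1)}$; in dimensions $n=2,3$ the extremal planar/spatial configurations force the tighter relation $\delta\geq c_n R^2$ (with a borderline logarithm in $n=3$), yielding the $\delta^{1/2}$ exponent.

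\emph{Inner radius.} Keeping the same Chebyshev center $x_0$, set $r'':=\sup_{z\in\overline{B(x_0)}}\operatorname{dist}(z,E)\geq r_{E}^{\text{in}}$. If $r''\leq R$ the inner bound follows at once from the outer one. Otherwise pick $z^*\in\overline{B(x_0)}$ achieving the supremum and shift $z^*$ toward $x_0$ by $r+R$, with $r:=(r''-R)/2$, producing $z$ satisfying $|z-x_0|\leq 1-r-R$ and $\operatorname{dist}(z,E)\geq r$, so that $B_r(z)\cap E=\emptyset$. Using $E\subset\overline{B_{1+R}(x_0)}$,
\[
\max_{w\in E}|w-z|\leq (1+R)+|x_0-z|\leq 2-r,
\]
hence $\diam(E\cup B_r(z))\leq 2$. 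The isodiametric inequality applied to $E\cup B_r(z)$ yields $|E|+|B|r^n\leq|B|$, so $r^n\leq\delta$ and $r''\leq R+2\delta^{1/n}$. Combining with the outer bound $R\leq K_{\text{out}}(n)\delta^{\alpha(n)}$, where $\alpha(n)\geq 1/n$ for every $n\geq 2$, we conclude $r_{E}^{\text{in}}\leq K_{\text{in}}(n)\delta^{1/n}$.

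The principal obstacle is the sharp outer-radius bound. A naive combination of Theorem~\ref{thm: main} with the cap estimate $|\Sigma_i|\geq c_n R^{(n+1)/2}$ and the $L^1$ bound $|E\Delta B(x)|\leq C\delta^{1/2}$ yields only $R\leq C\delta^{1/(n+1)}$, which is too weak. Extracting the sharper exponents requires bypassing the $L^1$ step and working directly with $|B|-|E|\leq\delta|B|$, exploiting the joint geometry of several Chebyshev contact points; the dimension-dependent behavior and the logarithm in dimension $3$ emerge from this geometry.
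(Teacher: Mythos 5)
Your inner-radius argument is essentially sound and is in fact close to the paper's: the paper augments $E$ to $H=E\cup B_{1-r_E^{\text{out}}}$ and applies the isodiametric inequality to locate a missing ball of radius comparable to $r_E^{\text{in}}$ inside $B_{1-r_E^{\text{out}}}$, while you insert the ball $B_r(z)$ directly and apply the isodiametric inequality to $E\cup B_r(z)$; both routes reduce, in the end, to the outer bound plus $r^n\le\de(E)$. One caution: the reduction ``assume $E$ convex'' announced at the start is not legitimate for $r_E^{\text{in}}$, since $E\subset F$ gives $r_F^{\text{in}}\le r_E^{\text{in}}$ and not the reverse (e.g.\ $E=B\setminus B_{1/2}$ has convex hull $\ov B$ with $r^{\text{in}}=0$). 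Your ball-insertion paragraph does not actually use convexity, so this is repairable, but it should not be stated as a global reduction.

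The outer-radius part, however, has a genuine gap: the inequalities $\de\ge c_nR^{(n+1)/2}$ for $n\ge4$ and $\de\ge c_nR^2$ for $n=2,3$ (with the logarithm in $n=3$) are asserted, not proved, and they are the entire content of the theorem. Note that a single contact cap cannot work: from $E\subset \ov{B_{1+R}(x_0)}\setminus\Sigma_i$ with $|\Sigma_i|\ge c_nR^{(n+1)/2}$ you only get
\[
|B|-|E|\;\ge\;|\Sigma_i|-\big((1+R)^n-1\big)|B|\;\approx\;c_nR^{(n+1)/2}-nR|B|\,,
\]
which is negative for small $R$ since $(n+1)/2>1$. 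So one must show that the Chebyshev configuration of \emph{several} contact points carves out total volume exceeding the excess $nR|B|$ by the right margin, and it is exactly this global balance (not a local cap computation) that produces the dimension-dependent exponents and the borderline logarithm at $n=3$. The paper does not attempt this directly: it passes to the convex hull $F$, transfers the deficit via Lemma~\ref{lemma: deficit} ($\de'(F)\le\de(E)$), rescales to $G=(|B|/|F|)^{1/n}F$, and invokes the Bernstein--Bonnesen--Fuglede stability theorem for the isoperimetric inequality on convex sets, which is precisely the statement $\a(G)\le K_0(n)\vphi_n(\de'(G))$ with your target exponents; Fuglede's proof rests on a spherical-harmonics expansion of the radial perturbation of $\pa G$ and is not elementary. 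Without either importing that theorem or supplying a genuine substitute for the multi-cap volume estimate, your outer bound — and hence also the case $r''\le R$ of your inner bound, which needs $R\lesssim\de^{1/n}$ and is not implied by the naive $R\lesssim\de^{1/(n+1)}$ — remains unproved.
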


\begin{remark}
{\rm  From Theorem~\ref{thm: main2} we easily deduce a quantitative estimate for the Hausdorff distance between $E$ and the family of balls. More precisely, let $d_H$ denote the Hausdorff distance between compact sets on $\R^n$, and notice that by immediate geometric arguments one has
\begin{equation}\label{defalpha}
\max\big\{r_{E}^{\text{in}},r_{E}^{\text{out}}\big\} \leq \alpha(E) :=\inf_{x\in\R^n}\,d_H(E,B(x)) \leq 2 \max\big\{r_{E}^{\text{in}},r_{E}^{\text{out}}\big\}\,.
\end{equation}
Hence, under the same assumption of Theorem~\ref{thm: main2}, we have
\[
\a(E)\leq 2\max\big\{r_{E}^{\text{in}},r_{E}^{\text{out}}\big\}  \leq 2 K(n)\de(E)^{1/n}.
\]
Of course, this estimate is weaker than Theorem~\ref{thm: main2}, as it hides the fact that a stronger estimate holds true for $r_E^{{\rm out}}$.}
\end{remark}
\subsection*{Strategy of the proof} In recent years, several stability estimates have been proved for various geometric inequalities, involving perimeters, capacities, eigenvalues and other relevant set functionals. Usually, the starting point of these results is the choice of an argument characterizing the optimal sets in the variational problem under consideration. In the case of the isodiametric inequality, our choice could have been the well-known argument by Bieberbach \cite{Bieberbach} based on Steiner symmetrization (see \cite[Section 2.2]{EvansGariepyBOOK}). However, due to the elusive nature of the diameter constraint, it is unclear how  to ``perturb'' Bieberbach's proof in order to obtain sharp quantitative stability estimates. We have avoided such difficulties thanks to a fruitful link between the isodiametric and the isoperimetric problem (Theorem~\ref{lemma: sup sui convessi} and Remark~\ref{isoiso}). In Section~\ref{section: proofs} we shall exploit this connection in order to derive the above stability estimates for the isodiametric inequality from the analogous stability estimates for the isoperimetric inequality. The sharpness of the above theorems shall then be discussed in Section~\ref{nos}, through the construction of suitable families of nearly optimal sets.

\begin{remark}[Explicit constants]
{\rm We stress the fact that, as we are going to discuss later on, the dimensional constants appearing in the above estimates are explicitly computable. This is, of course, a stronger information than the existence of a constant depending on the dimension $n$ only. The problem of determining the optimal constants in these estimates seems particularly difficult. In the case of the isoperimetric inequality, for example, this kind of question has been settled only in the planar case
$n=2$~\cite{campi,Nitsch,cicaleo2}. We may also notice that, by~\eqref{supermarcello}, the optimal constants $C(n)$ in~\eqref{main1} is smaller than the optimal constant $C_0(n)$ in~\eqref{sharp quantitative isoperimetric ineq}.}
\end{remark}

\subsection*{Connection between the isodiametric and the isoperimetric problem} Let us recall that, whenever $F\subset\R^n$ is a Lebesgue measurable set with $|F|<\infty$, the (Euclidean) isoperimetric inequality states that
\begin{equation}
  \label{isoperimetric inequality}
  P(F)\ge n|B|^{1/n}|F|^{(n-1)/n}\,,
\end{equation}
with equality if and only if $F$ is equivalent to a ball. Here, $P(F)$ denotes the distributional perimeter of $F$, a quantity that agrees with $\H^{n-1}(\pa F)$ whenever $F$ is either an open set with $C^1$-boundary, a convex set, or a polyhedron. Let now $E$ be a set in $\R^n$ with $\diam(E)=2$, and let $F$ be the convex envelope of $E$. The bridge between the isodiametric inequality and the isoperimetric inequality is provided by the following  variational principle: among convex sets with fixed diameter, balls maximize perimeter.

\begin{theorem}
  \label{lemma: sup sui convessi} If $F$ is a convex set in $\R^n$ with $\diam(F)=2$, then
  \begin{equation}\label{sup sui convessi}
   P(F)\le P(B)\,.
  \end{equation}
  When $n\ge 3$, equality holds in~\eqref{sup sui convessi} if and only if $F$ is equivalent to a ball.
\end{theorem}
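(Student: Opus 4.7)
The plan is to bound $P(F)$ via Cauchy's formula for convex bodies, which represents the perimeter as an average of hyperplane projection volumes,
\[
P(F)=\frac{1}{|B^{n-1}|}\int_{\Sn}\bigl|\pi_{u^\perp}(F)\bigr|_{n-1}\,d\H^{n-1}(u),
\]
where $\pi_{u^\perp}(F)$ denotes the orthogonal projection of $F$ onto $u^\perp\cong\R^{n-1}$ and $|\cdot|_{n-1}$ is $(n-1)$-dimensional Lebesgue measure. Since projections are $1$-Lipschitz, each $\pi_{u^\perp}(F)$ is convex of diameter at most $\diam(F)=2$. The isodiametric inequality in $\R^{n-1}$, which for convex bodies follows already from Bieberbach's Steiner-symmetrization argument recalled in the introduction, therefore yields $|\pi_{u^\perp}(F)|_{n-1}\le|B^{n-1}|$. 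Plugging this in and using $\H^{n-1}(\Sn)=n|B|=P(B)$ delivers \eqref{sup sui convessi}.

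For the rigidity statement with $n\ge 3$, equality in \eqref{sup sui convessi} forces $|\pi_{u^\perp}(F)|_{n-1}=|B^{n-1}|$ for $\H^{n-1}$-a.e.\ $u\in\Sn$, and continuity of the projection map extends this to all $u$. The equality case of the isodiametric inequality in $\R^{n-1}$ then promotes each convex set $\pi_{u^\perp}(F)$ to a genuine $(n-1)$-ball of radius $1$, with some center $c(u)\in u^\perp$. It therefore suffices to show that a convex body in $\R^n$, $n\ge 3$, whose every hyperplane projection is an $(n-1)$-ball of radius $1$ must be a translate of the unit ball.

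To do this, I would analyse the support function $h_F$. Since $\langle x,v\rangle=\langle \pi_{u^\perp}(x),v\rangle$ whenever $v\in u^\perp$, the projection hypothesis translates into
\[
h_F(v)=\langle c(u),v\rangle+1\qquad\text{whenever $u,v\in\Sn$ with $u\perp v$.}
\]
As the left-hand side does not depend on $u$, this forces, for any pair $u_1,u_2\in\Sn$, the difference $c(u_1)-c(u_2)$ to be orthogonal to every vector of the $(n-2)$-dimensional space $u_1^\perp\cap u_2^\perp$; equivalently, $c(u_1)-c(u_2)\in\mathrm{span}(u_1,u_2)$. Evaluating these relations on the standard orthonormal frame singles out a unique $x_0\in\R^n$ with $c(e_i)=\pi_{e_i^\perp}(x_0)$ for each $i$. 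A short linear-algebra argument --- based on the identity $\bigcap_{i=1}^n\mathrm{span}(u,e_i)=\mathrm{span}(u)$, valid for $u$ with no zero coordinate and requiring $n\ge 3$, combined with $c(u)\perp u$ --- then propagates this to $c(u)=\pi_{u^\perp}(x_0)$ for every $u\in\Sn$. Hence $h_F(v)=\langle x_0,v\rangle+1$ on $\Sn$, which identifies $F$ with $B(x_0)$.

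The main obstacle is precisely this final step, the characterization of balls among convex bodies by the property that every hyperplane projection is a lower-dimensional ball of fixed radius. Its failure in dimension two --- where any convex body of constant width $2$ (for instance a Reuleaux triangle) has all one-dimensional projections of length $2$ --- is exactly what explains the restriction $n\ge 3$ in the rigidity claim, even though inequality \eqref{sup sui convessi} itself holds for every $n\ge 2$.
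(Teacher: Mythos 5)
Your derivation of the inequality is exactly the paper's: Cauchy's formula plus the isodiametric inequality in $\R^{n-1}$ applied to each projection $\pi_{u^\perp}(F)$, which has diameter at most $2$. (The paper phrases this as an induction on $n$, since the isodiametric inequality in $\R^{n-1}$ is itself obtained from the perimeter bound in $\R^{n-1}$ via Remark~\ref{isoiso}, but the content is the same.) Where you genuinely diverge is in the rigidity step. The paper observes that equality forces a.e.\ projection to be a unit $(n-1)$-disk, deduces that $F$ has constant width and constant brightness, and then invokes Howard's theorem \cite{Ho} --- a deep result --- to conclude that $F$ is a ball. You instead use the full strength of what equality gives, namely that \emph{every} hyperplane projection is a unit ball (not merely that widths and brightnesses are constant), and prove the resulting characterization directly via support functions: $h_F(v)=\langle c(u),v\rangle+1$ for $v\perp u$, whence $c(u_1)-c(u_2)\in\mathrm{span}(u_1,u_2)$, and a linear-algebra argument (valid only for $n\ge3$, consistently with the Reuleaux counterexamples in the plane) pins down a common center $x_0$. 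This is the classical elementary fact that a convex body all of whose hyperplane shadows are balls is itself a ball (see, e.g., Gardner's book on geometric tomography), and your route has the merit of being self-contained and of making transparent why $n\ge3$ is needed, at the price of a final step that you only sketch: one still has to check that the a.e.\ statement extends to all $u$ (continuity of $u\mapsto\H^{n-1}(F_u)$ on convex bodies does this), define $x_0$ consistently from the frame relations, and handle directions $u$ with vanishing coordinates by a density/continuity argument on $h_F$. All of these are routine, so the proposal is correct; it simply trades the citation of a hard theorem for a stronger intermediate conclusion plus an elementary classical lemma.
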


\begin{remark}[Reuleaux polygons]
  {\rm In the case $n=2$, balls do not exhaust the equality cases in~\eqref{sup sui convessi}. Indeed, it turns out that every Reuleaux polygons of diameter $2$ satisfy equality in~\eqref{sup sui convessi}. A nice and complete introduction to these shapes is found, for example, in \cite[Section 3]{Mossinghoff}. To make an example, let us recall that a {\it regular} Reuleaux polygon is a convex set which is obtained starting from a regular polygon with an odd number of sides, by replacing edges with circular arcs: each arc is centered in a given vertex, and passes through the two vertexes of the opposite edge (the regular Reuleaux triangle and eptagon are represented in Figure~\ref{fig: Reuleauxtriangle}). In general, every Reuleaux polygon of diameter $d$ has perimeter $\pi\,d$; moreover, every bounded convex polygon is contained in a Reuleaux polygon of the same diameter. These two properties lead immediately to prove Theorem~\ref{lemma: sup sui convessi} in the planar case $n=2$.}
\end{remark}

\begin{figure}
\input{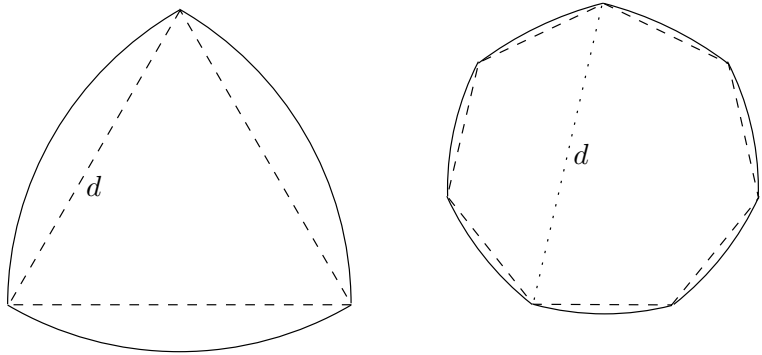}\caption{{\small The regular Reuleaux polygons of diameter $d$ have perimeter $\pi\,d$, and are thus optimal in~\eqref{sup sui convessi}. Twenty and fifty British pence are regular Reuleaux eptagons \cite[Section 3]{Mossinghoff}.}}\label{fig: Reuleauxtriangle}
\end{figure}

\begin{remark}[The isodiametric principle from the isoperimetric principle]\label{isoiso}
{\rm Let us prove the isodiametric inequality~\eqref{isodiametric inequality}, combining Theorem~\ref{lemma: sup sui convessi} with the isoperimetric inequality. Given $E\subset\R^n$, with $\diam(E)=2$, we want to prove that $|E|\le|B|$. Indeed, applying the isoperimetric inequality to the convex hull $F$ of $E$, which clearly has the same diameter of $E$, and taking Theorem~\ref{lemma: sup sui convessi} into account, we find that
\begin{equation}\label{from isop to isod}
n|B|^{1/n}|E|^{(n-1)/n}\le n|B|^{1/n}|F|^{(n-1)/n}\le P(F)\le P(B)=n|B|\,,
\end{equation}
that is, $|E|\le |B|$. If $|E|=|B|$, then by~\eqref{from isop to isod} we get $|F|=|E|$ and $P(F)=P(B)$. In particular $E$ is equivalent to its convex envelope $F$, which in turn is optimal in the isoperimetric inequality, and thus is equivalent to a unit ball. In conclusion, $E$ is equivalent to a unit ball. Clearly, these arguments can be exploited  in order to bound the {\it isoperimetric deficit} $\de'(F)$ of $F$ in terms of the isodiametric deficit $\de(E)$ of $E$, see Lemma~\ref{lemma: deficit} below.}
\end{remark}

\begin{proof}[Proof of Theorem~\ref{lemma: sup sui convessi}]
  We argue by induction over the dimension $n$, the case $n=1$ being trivial. For every $\nu\in\pa B$, let $F_\nu$ be the projection of $F$ over the orthogonal space $\nu^\perp$ to $\nu$. Let $B^{n-1}$ be the Euclidean unit ball in $\R^{n-1}$, and set $\om_{n-1}=\H^{n-1}(B^{n-1})$. By the Cauchy Formula, we have
  \[
  P(F)=\int_{\pa B}\frac{\H^{n-1}(F_\nu)}{\om_{n-1}}\,d\H^{n-1}(\nu)\,.
  \]
Since $\diam(F_\nu)\le 2$, the isodiametric inequality in $\R^{n-1}$ implies $\H^{n-1}(F_\nu)\le\om_{n-1}$, and thus $P(F)\le P(B)$. The discussion of equality cases for $n\ge 3$ is then achieved by a powerful result of Howard \cite{Ho}. Indeed, let us now assume that $n\ge 3$ and $P(F)=P(B)$. By the above argument, we have $\H^{n-1}(F_\nu)=\om_{n-1}$ for a.e. $\nu\in\pa B$, i.e., $F_\nu$ is optimal in the isodiametric inequality in $\R^{n-1}$, and thus it is an $(n-1)$-dimensional unit disk in $\nu^\perp$. In particular, $F$ is a convex set with constant {\it width} and  {\it brightness}: by Howard's Theorem \cite{Ho}, $F$ is a ball.
\end{proof}

\section{Proof of the stability estimates}\label{section: proofs}

Given a Lebesgue measurable set $F\subset\R^n$, we introduce the {\it isoperimetric deficit} $\de'(F)$ of $F$, defined as
\[
\de'(F)=\frac{P(F)}{n|B|^{1/n}|F|^{(n-1)/n}}-1\,.
\]
Like the isodiametric deficit, the isoperimetric deficit is invariant by scaling and by rigid motions. The isoperimetric inequality~\eqref{isoperimetric inequality} amounts to say that $\de'(F)\ge 0$, with $\de'(F)=0$ if and only if $F$ is equivalent to a ball. The starting point of our analysis is the following elementary lemma, relating the isodiametric deficit of a bounded set to the isoperimetric deficit of its convex envelope.

\begin{lemma}\label{lemma: deficit}
If $E\subset\R^n$ and $F$ is the convex envelope of $E$, then
\begin{equation}\label{starting point}
\de'(F)\le \de(E)\,.
\end{equation}
\end{lemma}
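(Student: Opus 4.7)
The plan is to chain three simple facts about $E$ and its convex envelope $F$, and then clean up with a mild concavity estimate. The three facts are:
\begin{itemize}
\item[(i)] $\diam(F)=\diam(E)$, because the convex hull of a set has the same diameter (the diameter of a convex set is attained between extreme points).
\item[(ii)] $|F|\ge|E|$, because $E\subseteq F$.
\item[(iii)] Applying Theorem~\ref{lemma: sup sui convessi} to the rescaled set $(2/\diam(F))\,F$ (which is convex and has diameter $2$) and then scaling back gives
\[
P(F)\le P(B)\left(\frac{\diam(F)}{2}\right)^{n-1}=n|B|\left(\frac{\diam(E)}{2}\right)^{n-1}.
\]
\end{itemize}

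With these in hand, I would substitute (iii) into the definition of the isoperimetric deficit and use (ii) to get
\[
1+\de'(F)=\frac{P(F)}{n|B|^{1/n}|F|^{(n-1)/n}}\le\frac{|B|^{(n-1)/n}\,(\diam(E)/2)^{n-1}}{|F|^{(n-1)/n}}
\le\left(\frac{|B|}{|E|}\right)^{(n-1)/n}\!\!\left(\frac{\diam(E)}{2}\right)^{n-1}.
\]
Raising to the power $n/(n-1)$ and comparing with the definition of the isodiametric deficit yields
\[
\bigl(1+\de'(F)\bigr)^{n/(n-1)}\le\frac{|B|}{|E|}\left(\frac{\diam(E)}{2}\right)^{n}=1+\de(E).
\]

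The final step is to convert this into the claimed linear bound. Since $(n-1)/n<1$ and $\de(E)\ge 0$, the concavity inequality $(1+x)^{(n-1)/n}\le 1+x$ for $x\ge 0$ gives
\[
1+\de'(F)\le\bigl(1+\de(E)\bigr)^{(n-1)/n}\le 1+\de(E),
\]
which is~\eqref{starting point}. There is essentially no obstacle here: the whole argument is a one-line rearrangement once Theorem~\ref{lemma: sup sui convessi} is available. The only point worth flagging is fact~(i), which must be invoked so that the perimeter bound from Theorem~\ref{lemma: sup sui convessi} can be expressed in terms of $\diam(E)$ rather than $\diam(F)$; everything else is purely algebraic.
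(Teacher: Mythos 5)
Your proposal is correct and follows essentially the same route as the paper: both rest on Theorem~\ref{lemma: sup sui convessi} for the perimeter bound, the inclusion $E\subseteq F$ for $|F|\ge|E|$, and the elementary fact that $y^{(n-1)/n}\le y$ for $y\ge 1$ (your use of $\de(E)\ge 0$ playing the role of the paper's use of $|F|\le|B|$). The only difference is cosmetic: the paper normalizes $\diam(E)=2$ at the outset by scale invariance, whereas you carry the diameter through and rescale inside the application of Theorem~\ref{lemma: sup sui convessi}.
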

\begin{proof} Since the isodiametric deficit and the isoperimetric deficit are invariant by scaling, without loss of generality we may assume that $2=\diam(E)=\diam(F)$. By Theorem~\ref{lemma: sup sui convessi}, and since $|F|\le |B|$ by the isodiametric inequality, we obtain
\[\begin{split}
\de'(F)&=\frac{P(F)}{n|B|^{1/n}|F|^{(n-1)/n}}-1\leq \frac{P(B)}{n|B|^{1/n}|F|^{(n-1)/n}}-1=\frac{n|B|}{n|B|^{1/n}|F|^{(n-1)/n}}-1\\
&=\left(\frac{|B|}{|F|}\right)^{\frac{n-1}{n}} -1\leq \frac{|B|}{|E|}-1=\de(E)\,,
\end{split}\]
that is, \eqref{starting point}.
\end{proof}

Starting from Lemma~\ref{lemma: deficit}, Theorem~\ref{thm: main} is now a corollary of the following theorem, first proved in \cite{FMP} (for alternative approaches see \cite{fmpK,cicaleo}).

\begin{theorem*}[\cite{FMP}]
If $F$ is a Lebesgue measurable set with $|F|<\infty$, then there exists $x\in\R^n$ such that
\begin{equation}\label{sharp quantitative isoperimetric ineq}
C_0(n)\sqrt{\de'(F)}\ge \frac{\big|F\Delta (x+t_F\,B)\big|}{|F|}\,,
\end{equation}
where $t_F=\big(|F|/|B|\big)^{1/n}$ and $C_0(n)$ is a constant depending on the dimension $n$ only.
\end{theorem*}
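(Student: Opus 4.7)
Define the Fraenkel asymmetry
\[
A(F):=\inf_{x\in\R^n}\frac{|F\Delta (x+t_F\,B)|}{|F|}.
\]
Both $\de'(F)$ and $A(F)$ are scaling-invariant, so I assume $|F|=|B|$, making $t_F=1$ and reducing the target to $A(F)\le C_0(n)\sqrt{\de'(F)}$. A standard approximation argument (exploiting the lower semicontinuity of $\de'$ and continuity of $A$ under $L^1$ convergence of equi-bounded sets) lets me further assume $F$ is a bounded open set with smooth boundary. The overall plan is to reduce to highly symmetric competitors via iterated Steiner symmetrizations, tracking $A$ quantitatively at each step, and then to conclude by a nearly-spherical analysis.

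For a unit vector $\nu$, let $F^\nu$ denote the Steiner symmetrization of $F$ with respect to $\nu^\perp$. Steiner symmetrization preserves volume and does not increase perimeter, so $\de'(F^\nu)\le\de'(F)$. Iterating along $n$ mutually orthogonal directions $\nu_1,\dots,\nu_n$ produces a set $F_\sharp$ invariant under reflection across $n$ orthogonal hyperplanes. Such an ``$n$-symmetric'' set has a unique center of symmetry $x_0$, and the infimum defining $A(F_\sharp)$ is attained at $x_0$, thereby eliminating the translation freedom.

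The heart of the argument, and in my view the main obstacle, is a quantitative comparison across a single Steiner symmetrization:
\[
A(F)\le A(F^\nu)+K(n)\sqrt{\de'(F)-\de'(F^\nu)}.
\]
Heuristically, symmetrization replaces each slice $F\cap(t\nu+\nu^\perp)$ by a ball of equal $(n-1)$-measure centered on the axis $\R\nu$; the aggregate $\nu^\perp$-displacement of these slice barycenters controls $A(F)-A(F^\nu)$ from above, while each displacement is paid for by a local perimeter decrement via a slicewise Brunn--Minkowski (equivalently, a quantitative codimension-one isoperimetric) inequality. Summing this control over the $n$ symmetrization steps yields $A(F)\le A(F_\sharp)+nK(n)\sqrt{\de'(F)}$, reducing the problem to the case of $n$-symmetric sets.

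For such $F_\sharp$, a Schwarz symmetrization around an axis through $x_0$ (using a comparison lemma of the above type once more) reduces to axisymmetric sets, after which I would separate two regimes. If $\de'(F_\sharp)$ exceeds a dimensional threshold, the trivial bound $A(F_\sharp)\le 2$ already suffices. If $\de'(F_\sharp)$ is small, $\pa F_\sharp$ can be parametrized as a graph $\rho(\om)=1+u(\om)$ over $\Sn$ with $u$ smooth and small; a Fuglede-type spherical-harmonics expansion of $P(F_\sharp)-P(B(x_0))$ yields $\|u\|_{L^2(\Sn)}^2\le C\,\de'(F_\sharp)$, and hence $|F_\sharp\Delta B(x_0)|/|B|\le C'\|u\|_{L^1(\Sn)}\le C''\sqrt{\de'(F_\sharp)}$. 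Assembling these inequalities produces the desired constant $C_0(n)$.
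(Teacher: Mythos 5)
The paper does not actually prove this statement; it quotes it from \cite{FMP} (with the explicit constant from \cite{fmpK}), so what you are really sketching is the Fusco--Maggi--Pratelli theorem itself. Your overall architecture (reduction to symmetric competitors, then a nearly-spherical analysis) matches the known proofs in outline, but the step you yourself call ``the heart of the argument'' --- the one-step comparison $A(F)\le A(F^\nu)+K(n)\sqrt{\de'(F)-\de'(F^\nu)}$ --- is precisely where the naive symmetrization approach breaks down, and your heuristic does not repair it. The perimeter decrement under Steiner symmetrization is governed by the \emph{derivatives} of the slice data, not by the displacements of slice barycenters, and it can vanish identically on sets that are far from $\nu$-symmetric. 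Concretely, let $F$ be the union of two disjoint congruent balls whose centers have distinct components along $\nu$ and whose projections on $\nu^\perp$ are disjoint: then $F^\nu$ is again a union of two such balls, with $|F^\nu|=|F|$ and $P(F^\nu)=P(F)$, so $\de'(F)-\de'(F^\nu)=0$; yet the distance between the two components strictly decreases, and the Fraenkel asymmetry of two disjoint balls is strictly increasing in their separation near the tangency configuration, so $A(F)>A(F^\nu)$ and your inequality fails. (Connected, small-deficit variants exist as well, because the equality cases of $P(F^\nu)\le P(F)$ are not exhausted by $\nu$-symmetric sets.) This is exactly why \cite{FMP} abandon symmetrization for the reduction: they cut $F$ by a hyperplane into two parts of equal measure, reflect each part, and prove that among the sets so obtained from suitably chosen orthogonal directions at least one retains a fixed fraction of $A(F)$ while its deficit at most doubles at each reflection, yielding an $n$-symmetric $F'$ with $A(F)\le C(n)A(F')$ and $\de'(F')\le 2^n\de'(F)$. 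Without a proof of your displacement lemma or a substitute of this kind, the proposal has a genuine gap at its central step.

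Two further points. First, an $n$-symmetric set of small deficit is only $L^1$-close to a ball; its boundary need not be a small graph over the sphere (thin tentacles are compatible with small deficit), so Fuglede's spherical-harmonics expansion does not apply directly. In \cite{FMP} this is handled by a further reduction to axially symmetric sets and a direct one-dimensional analysis there, not by Fuglede's estimate. Second, your opening approximation invokes lower semicontinuity of the perimeter, which points the wrong way: to transfer the inequality from smooth approximants $F_k$ to $F$ you need $P(F_k)\to P(F)$ (strict convergence of the approximating sequence), not merely $P(F)\le\liminf_k P(F_k)$. This is standard to arrange, but as written the reduction is not justified.
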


It is important to recall that the mass transportation approach developed in \cite{fmpK} allows to derive~\eqref{sharp quantitative isoperimetric ineq} with an explicit value for $C_0(n)$.  Namely, setting $n'=n/(n-1)$, one can take
\[
C_0(n)=\frac{181\,n^3}{\big(2- 2^{1/n'}\big)^{3/2}}\,.
\]
We are now in the position to prove Theorem~\ref{thm: main}.

\begin{proof}[Proof of Theorem~\ref{thm: main}] Let $F$ be the convex envelope of $E$. By Lemma~\ref{lemma: deficit} and by the quantitative isoperimetric inequality~\eqref{sharp quantitative isoperimetric ineq}, since $|F|\le|B|$ we find that, up to a translation,
\begin{equation}\label{proof1}
C_0(n)\sqrt{\de(E)}\ge C_0(n)\sqrt{\de'(F)}\ge\frac{|F\Delta (t_F B)|}{|F|} \geq \frac{|F\Delta (t_F B)|}{|B|}\,.
\end{equation}
By the triangular inequality we have
\begin{equation}\label{proof2}
|F\Delta B|\le |F\Delta  (t_F B)| + |(t_F B) \Delta B|= |F\Delta (t_F B)| + (|B|-|F|)\,.
\end{equation}
From~\eqref{proof1} and~\eqref{proof2} we get
\begin{equation}\label{supermarcello}\begin{split}
|B\Delta E|&\leq |B\Delta F|+|F\Delta E|\le |F\Delta (t_F B)|+(|B|-|F|)+(|F|-|E|)\\
&\leq C_0(n)|B|\sqrt{\de(E)}+(|B|-|E|)\leq |B|\Big(C_0(n)\sqrt{\de(E)}+\de(E)\Big)\,,
\end{split}\end{equation}
from which we immediately achieve the proof of~\eqref{main1} with $C(n)=C_0(n)+1$ under the assumption that $\de(E)\le 1$. If, otherwise, $\de(E)\ge 1$, then
\[
\frac{|B\Delta E|}{|B|}\le\frac{|B|+|E|}{|B|}\le 2\le 2\sqrt{\de(E)}\,,
\]
and~\eqref{main1} follows as $C_0(n)\ge 1$.
\end{proof}
\begin{figure}[htbp]
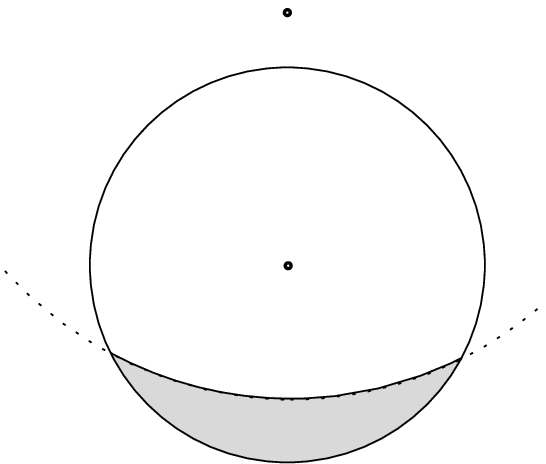\caption{{\small The measure $|B\setminus B_2(e)|$ behaves like $(r^{\text{out}}_E)^{(n+1)/2}$.}} \label{fig uno}
\end{figure}
\begin{remark}
{\rm A non sharp form of Theorem~\ref{thm: main2} easily follows from Theorem~\ref{thm: main}. Indeed, up to a translation we can always assume that~\eqref{main1} holds true with $x=0$. With this assumption, an immediate geometric argument shows that, calling $\bar x$ the point such that $E\subset B_{1+r^{\text{out}}_E}(\bar x)$, one has $|\bar x|\leq C'(n) r^{\text{out}}_E$, being $C'(n)$ a constant only depending on $n$. As a consequence, by definition of $r^{\text{out}}_E$ there exists some point
\[
r^{\text{out}}_E\leq r \leq \left(1+C'(n)\right)r^{\text{out}}_E
\]
and some point
\[
e\in \partial E\cap\partial B_{1+r}\,.
\]
Since $\diam(E)=2$, it must be $E\subset \overline B_2(e)$. In particular, from~\eqref{main1} (with $x=0$) we derive the lower bound
\[
|B| \, C(n)\sqrt{\de(E)}\ge|B\setminus E|\ge |B\setminus B_2(e)|\,.
\]
If $r^{\text{out}}_E$, and hence $r$, is small, then the set $B\setminus B_2(e)$ is, roughly speaking, a thin set with ``height'' of order $r$ and ``cross section'' of order $r^{1/2}$, see Figure~\ref{fig uno}. Therefore,
\[
|B\setminus B_2(e)|
\approx r \cdot r^{(n-1)/2}
\approx (r^{\text{out}}_E)^{(n+1)/2}\,.
\]
Thus, this simple argument suffices to prove the stability estimate
\[
r^{\text{out}}_E\le C(n)\de(E)^{1/(n+1)}\,,
\]
which however provides with a non-sharp decay rate for $r^{\text{out}}_E$ in terms of $\de(E)$.}
\end{remark}

We now turn to the proof of Theorem~\ref{thm: main2}. In this case, we are going to apply a beautiful stability result for the isoperimetric inequality which is due to Bernstein \cite{Bernstein} and Bonnesen~\cite{Bonnesen} in the planar case $n=2$, and to Fuglede \cite{Fuglede} in higher dimensions, where a bound for $\alpha$ defined in~\eqref{defalpha} is provided in terms of the isoperimetric deficit $\delta'$. We remark that the constants $K(n)$ appearing in the statement are explicitly computable from Fuglede's work~\cite{Fuglede}, and that all the exponents are sharp.

\begin{theorem*}[\cite{Bernstein,Bonnesen,Fuglede}]
There exist two positive constants $\eta_0(n)$ and $K_0(n)$ with the following property. If $G\subset\R^n$ is a convex set with $|G|=|B|$ and $\de'(G)\le\eta_0(n)$, then
\[
\alpha(G) \leq \left\{\begin{array}{ll}
K_0(2)\de'(G)^{1/2} & \mbox{if } n=2\,,\\[3pt]
K_0(3) \left(\de'(G)\, |\!\log \de'(G)|\right)^{1/2}&\mbox{if } n=3\,,\\[3pt]
K_0(n)\de'(G)^{2/(n+1)}&\mbox{if  } n\ge 4\,.
\end{array}
\right.
\]
\end{theorem*}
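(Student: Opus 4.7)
The plan is to implement Fuglede's nearly-spherical parametrization strategy, reducing the problem to a spectral inequality on $\Sn$ combined with a dimension-dependent Sobolev/interpolation estimate. The proof proceeds in four conceptual stages.

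\emph{Stage 1 (Normalization and nearly-spherical parametrization).} First I would show, by a compactness/contradiction argument exploiting convexity and $|G|=|B|$, that when $\de'(G)\le\eta_0(n)$ is small enough, after a suitable translation $G$ lies in a thin annulus around $\pa B$ and $\pa G$ is a radial Lipschitz graph over $\Sn$: there exists $u\in C^{0,1}(\Sn)$ with $\|u\|_\infty=:\e$ small and $\|\nabla u\|_\infty\le L(n)$ (the latter a standard consequence of convexity of $G$ close to $B$) such that
\[
\pa G=\{(1+u(\s))\s:\s\in\Sn\}\,.
\]
A further translation kills the first-order spherical-harmonic modes of $u$, so that $u$ is orthogonal in $L^2(\Sn)$ to the restrictions of the linear functions.

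\emph{Stage 2 (Second-order expansion and $H^1$ bound).} Writing
\[
|G|=\frac1n\int_{\Sn}(1+u)^n\,d\H^{n-1}\,,\qquad P(G)=\int_{\Sn}(1+u)^{n-2}\sqrt{(1+u)^2+|\nabla u|^2}\,d\H^{n-1}\,,
\]
I would expand each expression to second order in $u$, use $|G|=|B|$ to eliminate the linear term, and obtain
\[
\de'(G)\ge c(n)\Big(\|\nabla u\|_{L^2(\Sn)}^2-(n-1)\|u\|_{L^2(\Sn)}^2\Big)-R(u)\,,
\]
with error $R(u)=O\bigl(\e\,\|u\|_{H^1}^2\bigr)$. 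Since $u$ is orthogonal to the constant and linear spherical harmonics, the spherical-harmonic decomposition yields the Poincar\'e-type gap $\|\nabla u\|_{L^2}^2\ge 2n\,\|u\|_{L^2}^2$, and absorbing the error term for $\e$ sufficiently small gives the clean estimate
\[
\|u\|_{H^1(\Sn)}^2\le C(n)\,\de'(G)\,.
\]

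\emph{Stage 3 (From $H^1$ to $L^\infty$, dimension dichotomy).} For $n=2$, the one-dimensional Sobolev embedding $H^1(S^1)\hookrightarrow L^\infty(S^1)$ directly gives $\|u\|_\infty\le C\,\de'(G)^{1/2}$. For $n=3$ one sits on the critical Sobolev borderline: $H^1(S^2)$ barely fails to embed into $L^\infty$, and a Brezis--Gallou\"et / Trudinger-type log-interpolation on $S^2$ combined with the uniform Lipschitz bound on $u$ delivers $\|u\|_\infty\le C\bigl(\de'\,|\log\de'|\bigr)^{1/2}$. For $n\ge 4$ one performs a dimension-sensitive interpolation between the $L^2$ control $\|u\|_{L^2}\le C\sqrt{\de'}$ and the convexity-based Lipschitz bound, tuned to yield the exponent $2/(n+1)$. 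Since $r_G^{\text{in}}$ and $r_G^{\text{out}}$ are both dominated by $\|u\|_\infty$ in this parametrization, these bounds translate directly into bounds for $\a(G)$.

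\emph{Stage 4 (Main obstacle).} The sharpest point is the $n\ge 4$ case: a naive Gagliardo--Nirenberg estimate $\|u\|_\infty\lesssim\|u\|_{L^2}^{2/(n+1)}\|\nabla u\|_\infty^{(n-1)/(n+1)}$ together with $\|u\|_{L^2}\le C\sqrt{\de'}$ only gives the exponent $1/(n+1)$, whereas Fuglede's argument improves this to the sharp $2/(n+1)$ by exploiting the higher $L^p$-integrability of $u$ afforded by the Sobolev embedding $H^1(\Sn)\hookrightarrow L^{2(n-1)/(n-3)}(\Sn)$, carefully combined with the rigidity imposed by convexity; the precise tuning of these two ingredients is the heart of the proof. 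The logarithmic factor in $n=3$ is analogously delicate and reflects the sharpness of the Trudinger--Moser inequality on $S^2$: one must check that the Lipschitz constant of $u$ is genuinely independent of $\de'(G)$, so that the only loss in passing from $H^1$ to $L^\infty$ is the $|\log\de'|^{1/2}$.
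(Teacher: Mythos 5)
First, a remark on what you are being compared against: the paper does not prove this theorem at all --- it imports it as a black box from Bernstein, Bonnesen and Fuglede and uses it only in the proof of Theorem~\ref{thm: main2}. So your sketch is really being measured against Fuglede's original argument. Your overall architecture (nearly-spherical radial parametrization of $\pa G$ over $S^{n-1}$, second-order expansion of volume and perimeter, elimination of the low spherical harmonics by translation and the volume constraint, the resulting bound $\|u\|_{H^1(S^{n-1})}^2\le C(n)\,\de'(G)$, then a dimension-dependent passage from $H^1$ to $L^\infty$) is indeed Fuglede's strategy, and Stages 1--2, as well as the cases $n=2$ and $n=3$ of Stage 3, are essentially sound.

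The genuine gap is in the case $n\ge 4$. The convexity input you invoke --- a uniform Lipschitz bound $\|\nabla u\|_\infty\le L(n)$ --- is too weak to produce the exponent $2/(n+1)$, and the repair proposed in Stage 4 (the embedding $H^1\hookrightarrow L^{2(n-1)/(n-3)}$) does not close the gap: if $u$ attains $M=\|u\|_\infty$ and is $L$-Lipschitz, it exceeds $M/2$ only on a cap of radius $M/2L$, and feeding this into $\|u\|_{L^p}\le C\sqrt{\de'}$ with $p=2(n-1)/(n-3)$ yields only $M\le C\,\de'^{1/(n-1)}$, strictly weaker than $\de'^{2/(n+1)}$ for every $n\ge4$. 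In fact no purely analytic argument from ``$\|u\|_{H^1}^2\le\de'$ plus a uniform Lipschitz bound'' can succeed: a tent profile of height $M$ and slope $1$ has $\|u\|_{H^1}^2\sim M^{n-1}=:\de'$, while $\|u\|_\infty=M$ is much larger than $\de'^{2/(n+1)}=M^{2(n-1)/(n+1)}$ when $n\ge 4$. The missing ingredient is that such a tent cannot be the radial graph of a convex body: if $G$ is convex with $B_{1-\e}\subset G\subset B_{1+\e}$, every supporting hyperplane of $G$ makes an angle of order $\sqrt{\e}$ with the tangent plane of the sphere, i.e.\ $\|\nabla u\|_\infty\le C\|u\|_\infty^{1/2}$; equivalently, an outward bump of height $h$ must spread over a spherical cap of geodesic radius at least $c\sqrt h$ (the convex hull of $B$ and a point at distance $1+h$ exhibits exactly this profile). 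Combining this degenerate Lipschitz bound with the Dirichlet-energy control, a Cauchy--Schwarz inequality on that cap gives
\[
\de'\ge c\int_{\text{cap}}|\nabla u|^2\ge c\,h\cdot h^{(n-1)/2}=c\,h^{(n+1)/2}\,,
\]
hence $h\le C\de'^{2/(n+1)}$. This quadratic height--width relation forced by convexity, not higher $L^p$ integrability, is the heart of the $n\ge4$ case. (For $n=3$ your reading is correct: there the uniform Lipschitz bound does suffice, since the relevant quantity is the $2$-capacity of a cap of radius at least $M/L$ inside $S^2$, which is of order $1/|\log M|$, and this produces exactly the logarithmic factor.)
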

\begin{proof}[Proof of Theorem~\ref{thm: main2}]
Since $\diam(E)=2$, we have $r_{E}^{\text{out}}\le 1$ and $r_{E}^{\text{in}}\le 1$. For every $n\geq 2$, let us consider the function $\vphi_n:(0,\infty)\to(0,\infty)$ defined as
\[
\vphi_n(\de)=\left\{
\begin{array}{ll}
\de^{1/2}\,,& \mbox{if }n=2\,,\\
(\delta\,\max\{|\!\log\de|,1\})^{1/2}\,,& \mbox{if }n=3\,,\\
\de^{2/(n+1)}\,,& \mbox{if }n\ge 4\,,
\end{array}
\right.\qquad \de>0\,.
\]
It is immediate to notice that the function $\varphi$ is increasing for any $n$. Let us also set $\eta(n) = \min\{ \eta_0(n), e^{-1} \}$, so that by Fuglede's Theorem one has that
\begin{equation}\label{fromFug}
\alpha(G) \leq K_0(n) \varphi_n(\delta'(G)) \qquad \forall\, G\subset\R^n, G \hbox{ convex}, \,  0\leq \delta(G) \leq \eta(n)\,.
\end{equation}
{\it Estimate for $r^{\text{out}}_{E}$:}  Let  $E\subset\R^n$ be a set with $\diam(E)=2$. If $\delta(E)\geq \eta(n)$ then, since $\varphi$ is increasing, we clearly have
\begin{equation}\label{case1}
r_{E}^{\text{out}}\leq 1 \leq \frac{\varphi_n(\delta(E))}{\varphi_n(\eta(n))}\,.
\end{equation}
Otherwise, assuming that $\delta(E) < \eta(n)$, let $F$ be the convex envelope of $E$, and let $G=(|B|/|F|)^{1/n}F$. By convexity, $r^{\text{out}}_{E}=r^{\text{out}}_{F}$. By the isodiametric inequality $|F|\le|B|$, thus $r^{\text{out}}_{G}\geq r^{\text{out}}_{F}=r^{\text{out}}_{E}$.
By scale invariance of the deficit and by Lemma~\ref{lemma: deficit},
\[
\de'(G)=\de'(F)\le\de(E)\le\eta(n)\,,
\]
therefore, also taking into account~(\ref{defalpha}), (\ref{fromFug}) and the monotonicity of $\varphi$, we have
\begin{equation}\label{case2}
r^{\text{out}}_{E}\leq r^{\text{out}}_{G}\le \a(G)\leq K_0(n) \,\vphi_n(\de'(G))\leq K_0(n) \,\vphi_n(\de(E))\,.
\end{equation}
In conclusion, the estimate~(\ref{newfn}) holds true with the choice
\[
K_{\text{out}}(n) = \max \bigg\{ K_0(n), \frac{1}{\varphi_n(\eta_n)}\,\bigg\}\,.
\]
\begin{figure}[htbp]
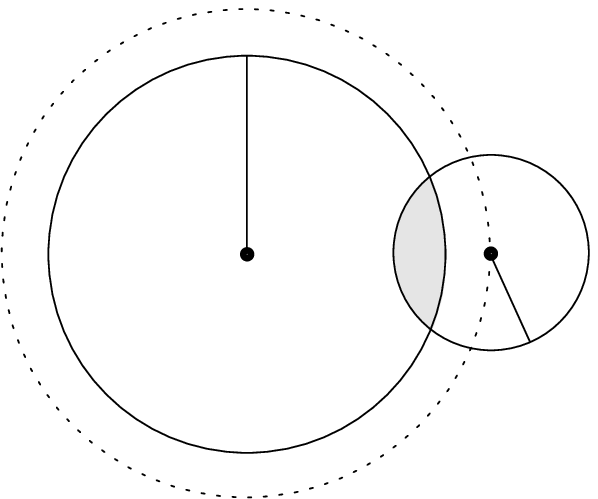
\caption{\small{The worst possible situation in~(\ref{eqfig})}}\label{fig: kappa}
\end{figure}
\noindent {\it Estimate for $r^{\text{in}}_{E}$:} Exactly as in~(\ref{case1}), if $\delta(E)\geq \eta(n)$ we know
\begin{equation}\label{case3}
r_{E}^{\text{in}}\leq 1 \leq \frac{\delta(E)^{1/n}}{\eta(n)^{1/n}}\,.
\end{equation}
On the other hand, if $\delta(E) \leq \eta(n)$ then~(\ref{case2}) is in force. As a consequence, if $r^{\text{in}}_{E}\leq 2\,r^{\text{out}}_{E}$, then we immediately get
\begin{equation}\label{case4}
r_{E}^{\text{in}}\leq 2  \, r_{E}^{\text{out}} \leq 2 K_0(n) \varphi_n(\delta(E)) \leq 6e^{-1} K_0(n) \delta(E)^{1/n}\,,
\end{equation}
using the trivial fact that $\varphi_n(t) \leq 3e^{-1} t^{1/n}$ for all $n$ and for all $0\leq t \leq \eta(n)$.\par
We are then left to consider the last possible case, namely, if
\begin{equation}\label{inout}
\delta(E)\leq \eta(n), \qquad r^{\text{in}}_{E} >2\,r^{\text{out}}_{E}.
\end{equation}
By definition of $r^{\text{out}}_{E}$ we know that, up to a translation,
\begin{equation}\label{yuto1}
E\subset B_{1+r^{\text{out}}_{E}}\,.
\end{equation}
By the definition of $r^{\text{in}}_{E}$, it readily follows the existence of $x\in B$ such that
\begin{equation}\label{yuto2}
B_{r^{\text{in}}_{E}}(x)\subset\R^n\setminus E\,.
\end{equation}
Let us set $H=E\cup B_{1-r^{\text{out}}_{E}}$. By~\eqref{yuto1}, we find $\diam(H)=\diam(E)=2$. By the isodiametric inequality,
we find $0\le |B|-|H|=|B\setminus H|-|H\setminus B|$, which implies
\begin{equation}\label{yuto3}
|(B\setminus B_{1-r^{\text{out}}_{E}})\setminus E| =  |B\setminus H|\ge |H\setminus B| = |E\setminus B|\,.
\end{equation}
Therefore,
\begin{equation}\label{yuto4}\begin{split}
|B|\de(E)&\geq |E|\de(E)=|B|-|E|=|B\setminus E|-|E\setminus B|\\
&=|B_{1-r^{\text{out}}_{E}}\setminus E|+ |(B\setminus B_{1-r^{\text{out}}_{E}})\setminus E|-|E\setminus B|\\
\mbox{by~\eqref{yuto3}}\quad&\geq|B_{1-r^{\text{out}}_{E}}\setminus E|\\
\mbox{by~\eqref{yuto2}}\quad&\geq|B_{1-r^{\text{out}}_{E}}\cap B_{r^{\text{in}}_{E}}(x)|\\
\mbox{by~\eqref{inout}}\quad&\geq|B_{1-(r^{\text{in}}_{E}/2)}\cap B_{r^{\text{in}}_{E}}(x)|\,.
\end{split}\end{equation}
Notice now that, for every $s\in (0,1/2)$ and every $x\in \overline{B}$, one has (see Figure~\ref{fig: kappa}, which shows the ``worst'' case, namely when $x\in \partial B$)
\begin{equation}\label{eqfig}
B_{s/2}\left(\left(1-\frac32\,s\right)x\right)\subset B_{1-s}\cap B_{2s}(x)\,.
\end{equation}
As a consequence, recalling that $r^{\text{in}}_{E}\leq 1$, from~(\ref{yuto4}) we deduce
\[
\de(E)\geq \bigg(\frac{r^{\text{in}}_{E}}{4}\bigg)^n\,,
\]
that is,
\begin{equation}\label{case5}
r^{\text{in}}_{E} \leq 4 \,\delta(E)^{1/n}\,.
\end{equation}
Finally, putting together~(\ref{case3}), (\ref{case4}) and~(\ref{case5}), we obtain the validity of~(\ref{fnin}) as soon as we take
\[
K_{\text{in}}(n) \geq \max \bigg\{ \frac{1}{\eta(n)^{1/n}}, 6 \,e^{-1}K_0(n), 4\,\bigg\}\,.
\]
We have then concluded the proof of Theorem~\ref{thm: main2}.
\end{proof}

\section{Nearly optimal sets}\label{nos}

This section is devoted to the construction of examples showing the optimality of the decay rates provided in Theorem~\ref{thm: main} and Theorem~\ref{thm: main2}. These examples play around the notions of axially symmetric sets and of rearrangement by spherical caps, that are briefly reviewed in Section~\ref{rearrangement}. The examples are then constructed in Section~\ref{optimality}. Before coming to this, we settle the proof of the optimality of~\eqref{fnin}.

\begin{example}[Optimality of~\eqref{fnin}]{\rm
It is enough to pick up $x\in B$, and set $E=B\setminus B_{r}(x)$ for some $r\in(0,{\rm dist}(x,\pa B))$. Then we have
\[
\delta(E)=\frac{|B_r(x)|}{|E|} = \frac{r^n}{1-r^n}\,,
\qquad\quad r_E^{{\rm in}}=r\,.
\]
The decay rate found in estimate~\eqref{fnin} is thus trivially optimal for any dimension $n$.}
\end{example}

\subsection{Rearrangement by spherical caps}\label{rearrangement}
We briefly discuss the properties of the rearrangement by spherical caps in connection with our problem. We introduce some notations for sets enjoying such symmetry, that will be used in the construction of our nearly optimal sets. Finally, we prove that rearrangement by spherical caps transforms a bounded measurable set $E$ into an axially symmetric set $E\sca$ with the same measure and possibly lower diameter. This result is not strictly needed in our constructions, but gives a flavor of why we are looking for nearly optimal sets in this class.
\par
Consider the geodesic distance $d$ on $\pa
B$.  For every $A\subset\pa B$ and $\a\in(0,\pi)$, let $I_\a A$ be
the $\a$-neighborhood of $A$ in $\pa B$ (with respect to the
geodesic distance $d$),  defined by
\[
I_\a A=\{q\in\pa B:d(q,A)<\a\}\,.
\]
Moreover,  given  $e\in\pa B$ and $\a\in[0,\pi]$, let us denote by $K[e,\a]$ the spherical cap contained in $\pa B$ with center at $e$ and geodetic radius $\a$. In other words,
\begin{equation}\label{defikappa}
K[e,\a]:=\{q\in\pa B: d(e,q)<\a\}  = I_\a \{e\}.
\end{equation}
Then the following Brunn-Minkowski type inequality on $\pa B$ holds true (see, e.g., \cite[Section 12]{Gardner}),
\begin{equation}\label{bm on ball}
\H^{n-1}(I_\a A)\ge \H^{n-1}(I_\a K[e,\beta])\,,
\end{equation}
whenever $\beta$ is such that $\H^{n-1}(A)=\H^{n-1}(K[e,\beta])$.\par
Let now $E$ be a bounded measurable  subset of $\R^n$. We associate to $E$ the measurable function $v_E:(0,\infty) \mapsto [0,\pi]$, defined so that
\[
r^{n-1}\H^{n-1}(K[e,v_E(r)])=\H^{n-1}(\overline E\cap\pa B_{r})\,,\quad r>0\,,
\]
where $B_{r}$ denotes the ball or radius $r$ centered at zero. The rearrangement by spherical caps $E\sca$ of $E$, with center at the origin and axis $e\in \pa B$, is thus defined as
\[
E\sca=\Big\{p\in\R^n: p\in |p|\, K[e,v_E(|p|)]\Big\}\,.
\]
In the next Theorem we prove that, as in the case of Steiner symmetrization, the diameter decreases under symmetrization by spherical caps.

\begin{theorem}
For every bounded measurable set $E$ we have $|E\sca|=|E|$ and
\[
\diam (E\sca)\le \diam (E)\,.
\]
\end{theorem}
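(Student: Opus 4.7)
The plan is to handle the two statements separately. The equality $|E\sca|=|E|$ will follow at once from the polar disintegration of Lebesgue measure: since the definition of $v_E$ gives
\[
r^{n-1}\,\H^{n-1}\bigl(K[e,v_E(r)]\bigr)=\H^{n-1}\bigl(\ov E\cap\pa B_{r}\bigr)\qquad\text{for every }r>0,
\]
integrating the corresponding identity for $E\sca$ against $dr$ yields $|E\sca|=|E|$.

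For the diameter inequality, I will establish the following comparison principle as the core step: \emph{for a.e.\ $r_1,r_2>0$ such that the slices $A_i:=(\ov E\cap\pa B_{r_i})/r_i$ have positive $\H^{n-1}$-measure, there exist $q_i\in r_iA_i\subset\ov E$ whose angle at $0$ is at least $\min\{\pi,\,v_E(r_1)+v_E(r_2)\}$.} Granted this, the diameter bound is routine: if $p_1,p_2\in E\sca$ and $r_i=|p_i|$, then the definition of $E\sca$ forces $p_i/r_i\in K[e,v_E(r_i)]$, so the triangle inequality on $\pa B$ bounds the angle $\theta$ between $p_1$ and $p_2$ by $\min\{\pi,\,v_E(r_1)+v_E(r_2)\}$. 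Since $\cos$ is decreasing on $[0,\pi]$, the identity $|p_1-p_2|^2=r_1^2+r_2^2-2r_1r_2\cos\theta$ yields $|q_1-q_2|\ge|p_1-p_2|$, and passing to the supremum gives $\diam(E\sca)\le\diam(E)$.

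The main work is therefore in the comparison principle. I would exploit the antipodal identity $d(q,q')+d(q,-q')=\pi$ on $\pa B$, which rewrites
\[
\sup_{q_i\in A_i}d(q_1,q_2)=\pi-\mathrm{dist}(A_1,-A_2),
\]
where $-A_2:=\{-q:q\in A_2\}$. The target becomes $\mathrm{dist}(A_1,-A_2)\le\pi-v_E(r_1)-v_E(r_2)$. Assuming $v_E(r_1)+v_E(r_2)\le\pi$, let $d_0:=\mathrm{dist}(A_1,-A_2)$; for any small $\e>0$ the neighbourhood $I_{d_0-\e}A_1$ is disjoint from $-A_2$. The spherical Brunn--Minkowski inequality~\eqref{bm on ball}, applied to $A_1$ (whose cap rearrangement has geodetic radius $v_E(r_1)$), yields $\H^{n-1}(I_{d_0-\e}A_1)\ge\H^{n-1}(K[e,v_E(r_1)+d_0-\e])$. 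Combining this with the complement identity $\H^{n-1}(\pa B\setminus K[e,\gamma])=\H^{n-1}(K[e,\pi-\gamma])$ and with $\H^{n-1}(A_2)=\H^{n-1}(K[e,v_E(r_2)])$ gives $v_E(r_2)\le\pi-v_E(r_1)-d_0+\e$, and letting $\e\to0$ concludes. The residual case $v_E(r_1)+v_E(r_2)>\pi$ is immediate, since then the measures of $A_1$ and $-A_2$ sum to strictly more than $\H^{n-1}(\pa B)$, forcing $A_1\cap(-A_2)\ne\emptyset$ up to a null set and hence producing antipodal points in $A_1,A_2$.

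The principal obstacle is exactly this comparison principle: the angular spread of two slices is not determined by their individual $\H^{n-1}$-measures in any direct way, and the role of the spherical Brunn--Minkowski inequality is precisely to convert measure information on $\ov E\cap\pa B_{r_i}$ into the sharp geometric bound on the angular extent. A minor technical nuisance, handleable by an approximation argument relying on the integrability of $r\mapsto\H^{n-1}(\ov E\cap\pa B_r)$, is the treatment of the measure-zero set of radii for which the slice is anomalous.
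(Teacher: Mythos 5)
Your proof is correct and follows essentially the same route as the paper: both reduce the diameter bound to a comparison of the angular extents of the spherical slices $\ov E\cap\pa B_{r_i}$, and both obtain that comparison from the spherical Brunn--Minkowski inequality \eqref{bm on ball} applied to a geodesic neighbourhood of one slice that must avoid the antipodal image of the other. The differences are only organizational --- the paper phrases the key inequality through a radius $\psi(r_p,r_q)$ defined via the chord length $\diam(E)$ and then verifies $v_E(r_p)+v_E(r_q)+\psi(r_p,r_q)\le\pi$, while you work directly with $\mathrm{dist}(A_1,-A_2)$ and convert back with the law of cosines; also your ``a.e.'' caveat is unnecessary, since the argument works for every pair of radii whose slices have positive measure.
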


\begin{proof}
The identity $|E\sca|=|E|$ being elementary, we directly focus on the
inequality $\diam (E\sca)\le \diam (E)$. To this purpose, let $p,q\in \pa B$ and $r_p, \, r_q\in\R$ be such that $r_p p$ and $r_q q$
belong to $\overline E\sca$, and $|r_p\,p-r_q\,q|=\diam(E\sca)$.
\begin{figure}[th]
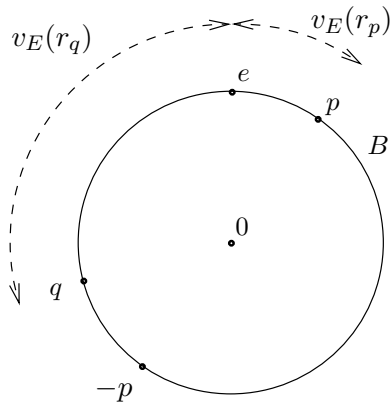\caption{{\small The situation for $p$ and $q$ realizing the diameter of $E\sca$.}}\label{pqe}
\end{figure}
Since $r_p p$ and $r_q q$ realize the diameter of $E\sca$, by the definition of the rearrangement by spherical caps it is clear that $p$, $q$ and $e$ belong to a same two-dimensional subspace of $\R^n$, and by construction
\[
d(e,p)\le v_E(r_p)\,,\qquad d(e,q)\le v_E(r_q)\,.
\]
In particular (see Figure~\ref{pqe}), by the triangular inequality
\begin{eqnarray}\label{-pq}
d(-p,q) \ge d(-p,p)-d(p,e)-d(e,q) \ge \pi-( v_E(r_p)+ v_E(r_q))\,.
\end{eqnarray}
We can immediately exclude the case when both $v_E(r_p)=0$ and $v_E(r_q)=0$. Indeed, in this case we have $\diam(E\sca)=|r_p-r_q|$, and on the other hand both $\partial B_{r_p}$ and $\partial B_{r_q}$ contain points of $\overline E$, which directly implies $\diam(E) \geq |r_p-r_q| = \diam(E\sca)$, so the thesis would be already achieved. Therefore, without loss of generality we can assume that
\begin{equation}\label{wlog}
v_E(r_q)>0\,.
\end{equation}
Consider now the set of all the points in $\pa B_{r_q}$ that lie at distance larger than $\diam (E)$ from $r_p p$,
\[
\Big\{x\in\pa B_{r_q}:|r_p p- x|> \diam(E) \Big\}\,.
\]
If this set is empty, then in particular $\diam(E\sca)= |r_p p- r_qq|\leq \diam(E)$, thus we have already concluded. Otherwise, the above set is a non-empty spherical cap centered at $-r_q p$, and we denote by $r_q\psi(r_p,r_q)>0$ its radius. Thus,
\begin{equation}\label{def psi}
\Big\{x\in\pa B_{r_q}:|r_p p- x|> \diam(E) \Big\}=r_q K[-p,\psi(r_p,r_q)]\,.
\end{equation}
Since $r_pp$ and $r_qq$ realize the diameter of $E\sca$, saying that $\diam(E\sca)\leq \diam(E)$ is equivalent to say that $r_qq$ lies at a distance smaller than $\diam(E)$ from $r_pp$, which by~(\ref{def psi}) amounts to
\[
d(q,-p) \geq \psi(r_p,r_q)\,.
\]
In turn, in view of~(\ref{-pq}) the thesis is then reduced to check that
\begin{equation}\label{step three}
v_E(r_q)+ v_E(r_p)+\psi(r_p,r_q)\le \pi\,.
\end{equation}
By the diameter constraint, we have that for every $r_p \,p'\in \pa B_{r_p}\cap \overline E$
\[
r_q K[-p',\psi(r_p,r_q)]\subset\pa B_{r_q}\setminus \overline E\,.
\]
Therefore
\[
I_{\psi(r_p,r_q)} a\left\{\frac{1}{r_p} ( \pa B_{r_p} \cap \overline E)\right\}\subset
\frac{1}{r_q} \left\{ \pa B_{r_q} \setminus \overline E\right\}\,,
\]
where $a:\pa B\to\pa B$ is the antipodal map, i.e.,  $a(p):=-p$.
Thus
\begin{equation}\label{number}\begin{split}
\H^{n-1}(K[p, v_E(r_q)])&=\,\frac{\H^{n-1}(\pa B_{r_q}\cap E)}{r_q^{n-1}}=\H^{n-1}(\pa B)-\frac{\H^{n-1}(\pa B_{r_q}\setminus E)}{r_q^{n-1}}\\
&\leq \H^{n-1}(\pa B)-\H^{n-1}\left(I_{\psi(r_p,r_q)}a\left\{\frac{1}{r_p}(\pa B_{r_p} \cap E)\right\}\right)\\
&\leq\H^{n-1}(\pa B)-\H^{n-1}(I_{\psi(r_p,r_q)}K[p, v_E(r_p)]),
\end{split}\end{equation}
where in the last inequality we have applied~\eqref{bm on ball} and the fact that the antipodal map is an isometry. The above inequality~(\ref{number}) ensures that $v_E(r_p)+\psi(r_p,r_q)<\pi$: indeed, otherwise the right term is zero while the left one is strictly positive by~(\ref{wlog}). And in turn, since $v_E(r_p)+\psi(r_p,r_q)<\pi$ then 
\[
I_{\psi(r_p,r_q)}K[p, v_E(r_p)]=K[p, \psi(r_p,r_q)+v_E(r_p)]\,,
\]
which inserted in~(\ref{number}) yields
\[
\H^{n-1}(K[p, v_E(r_q)])\le\H^{n-1}(K[-p,\pi-( v_E(r_p)+\psi(r_p,r_q))])\,,
\]
and~\eqref{step three} follows by the monotonicity of $\a\in(0,\pi)\mapsto \H^{n-1}(K[p,\a])$. Thus, we conclude the thesis.
\end{proof}

\subsection{Sharp decay rates}\label{optimality}
We now pass to construct families of nearly optimal sets in the isodiametric inequality showing the optimality of the decay rates in Theorems~\ref{thm: main} and~\ref{thm: main2}.

Given $\e\in(0,1)$, $f,g:(0,\e)\to (0,\pi)$, and $p\in \pa B$, let us denote by $E[\e,f,g,p]\subset\R^n$ the set defined by
\[
E[\e,f,g,p] : = B \cup \bigcup_{0\le t \le \e} (1+\e-t) K[p,f(t)] \setminus \bigcup_{0\le t \le \e} (1-\e+t) K[-p,g(t)] ,
 \]
 where $K[\cdot,\cdot]$ is the spherical cap defined in~\eqref{defikappa}. In the following lemma we provide a sufficient condition for such a set to have diameter equal to $2$, together with an upper bound for its isodiametric deficit.

\begin{lemma}\label{thm: fg}
Let $\e\in(0,4/9)$, $f:(0,\e) \to (0,\pi/8)$, $g:(0,\e) \to (0,\pi)$ be defined as
\begin{equation}\label{g}
g(t)=\max\{f(s)+\pi\,\sqrt{t-s}:0\le s\le t\}\,,\quad\quad t\in (0,\e)\,,
\end{equation}
and set $E=E[\e,f,g,p]$ for some $p\in\pa B$. Then $\diam(E)=2$, and
\begin{eqnarray}\label{marcello}
\de(E)\le \frac1{|E|}\,\int_0^{\e} \H^{n-1}(K[p,g(t)])-\H^{n-1}(K[p,f(t)])\,dt.
\end{eqnarray}
Moreover,
\begin{gather}
r_E^{{\rm out}}\ge \frac{\e}3\,,\label{marcello2} \\
\inf_{x\in\R^n}|E\Delta B(x)|\geq \frac 13\, \min\left\{c(n)\e\,,\int^{\e}_{\e/2}\H^{n-1}(K[p,f(t)])\,dt\right\},\label{marcello3}
\end{gather}
for some constant $c(n)\in(0,\infty)$ depending only on $n$.
\end{lemma}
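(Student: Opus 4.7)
$\textbf{Plan.}$ The four assertions are mostly independent; I would address them in order, with (\ref{marcello2}) being the technical core.

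For $\diam(E)=2$, the lower bound uses $(1+\e)p,\,-(1-\e)p\in\overline E$ (the latter as a limit from $B_{1-\e}\subset E$). For the upper bound, the only delicate case pairs a mountain point $x=(1+\e-s)p'$ with $p'\in K[p,f(s)]$ and a point $y$ at radius $1-\e+t$ with $y/|y|\notin K[-p,g(t)]$. Writing $\phi$ for the angle between $-p'$ and $y/|y|$, the law of cosines gives $|x-y|^2=r_1^2+r_2^2+2r_1 r_2\cos\phi$, so $|x-y|\le 2$ reduces to an explicit upper bound on $\cos\phi$. A direct calculation shows this is implied by $\phi\ge\pi\sqrt{t-s}$, with $\pi$ being a generous overestimate of the sharp threshold $\sim 2\sqrt{t-s}$ (the slack is validated by $\e<4/9$). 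The defining relation $g(t)=\max_{s\le t}\{f(s)+\pi\sqrt{t-s}\}$ and the spherical triangle inequality (since $-p'$ is within angle $f(s)$ of $-p$) then deliver precisely $\phi\ge\pi\sqrt{t-s}$.

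For (\ref{marcello}), polar coordinates yield $|B|-|E|=|\text{crater}|-|\text{mountain}|=\int_0^\e\bigl[(1-\e+t)^{n-1}\H^{n-1}(K[p,g(t)])-(1+\e-t)^{n-1}\H^{n-1}(K[p,f(t)])\bigr]\,dt$, using that $\H^{n-1}(K[-p,\cdot])=\H^{n-1}(K[p,\cdot])$. Since $(1-\e+t)^{n-1}\le 1\le(1+\e-t)^{n-1}$ on $[0,\e]$, the integrand is bounded by $\H^{n-1}(K[p,g(t)])-\H^{n-1}(K[p,f(t)])$; dividing by $|E|$ gives (\ref{marcello}).

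For (\ref{marcello2}), suppose $E\subset B_{1+r}(x)$ and decompose $x=x_\parallel p+x_\perp$. The constraints $|x-(1+\e)p|^2\le(1+r)^2$ (from the tip $(1+\e)p\in E$) and $|x+(1-\e)p|^2\le(1+r)^2$ (from $-(1-\e)p\in\overline E$) sum, using $(1+\e-x_\parallel)^2+(1-\e+x_\parallel)^2=2+2(\e-x_\parallel)^2$, to $(\e-x_\parallel)^2+|x_\perp|^2\le 2r+r^2$. A further constraint from a suitable extremal point of $\overline E\cap\pa B$ (the equator if $g(\e)<\pi/2$, otherwise the boundary circle of $K[-p,g(\e)]$) combined with the tip lower bound $x_\parallel\ge\e-r$ then forces $r\ge\e/3$.

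For (\ref{marcello3}), fix $x\in\R^n$ and split on whether $|x|\ge c_1\e$ for a suitable dimensional $c_1\in(0,1/2)$. In the ``far'' case, the standard estimate $|B\Delta B(x)|\ge c_2(n)|x|$ and the triangle inequality $|E\Delta B(x)|\ge|B\Delta B(x)|-|E\Delta B|$ (with $|E\Delta B|=|\text{mountain}|+|\text{crater}|$ bounded via (\ref{marcello})) give $|E\Delta B(x)|\ge c(n)\e$. In the ``close'' case $|x|<c_1\e$, for any mountain point $y=r'q$ at depth $t\in[\e/2,\e-|x|]$ one has $|y-x|\ge r'-|x|\ge 1$, so $y\notin B(x)$; this contributes at least $\int_{\e/2}^{\e-|x|}\H^{n-1}(K[p,f(t)])\,dt\ge c\int_{\e/2}^\e\H^{n-1}(K[p,f(t)])\,dt$ to $|E\setminus B(x)|$ (for $c_1$ sufficiently small). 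Taking the minimum and absorbing universal constants into the factor $1/3$ gives (\ref{marcello3}). The hardest step is (\ref{marcello2}): a naive argument from only the mountain tip and $B_{1-\e}\subset\overline E$ yields only $r\gtrsim\e^2/(2(1+\e))$; obtaining the linear rate $\e/3$ demands identifying a third extremal point and carefully balancing the three inequalities.
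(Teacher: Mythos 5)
Your treatment of $\diam(E)=2$ and of~\eqref{marcello} matches the paper's (the angular threshold $\pi\sqrt{t-s}$, i.e.\ the paper's Step~I estimate on $\psi(s,t)$, and the polar--coordinate computation with $(1-\e+t)^{n-1}\le 1\le(1+\e-t)^{n-1}$), so those parts are fine. The two quantitative lower bounds, however, each contain a genuine gap.

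For~\eqref{marcello2}, your proposed third extremal points do not produce the linear rate. After the two summed constraints you only know $|x-\e p|\le\sqrt{2(2r+r^2)}=O(\sqrt r)$, and in any case the candidate center is essentially $\e p$; the equator point of $\pa B$ lies at distance $\sqrt{1+\e^2}=1+O(\e^2)$ from $\e p$, which recovers only $r\gtrsim\e^2$ --- exactly the naive rate you say you want to beat. The alternative candidate, the rim of $K[-p,g(\e)]$ on $\pa B$, sits at geodesic angle $g(\e)\ge\pi\sqrt{\e}$ from $-p$; since $g(\e)$ can substantially exceed $\pi/2$, that rim can lie at distance $<1$ from $\e p$ and then yields nothing. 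The whole point (which you correctly flag as the hard step) is to exhibit a point of $\overline E$ that is simultaneously at radius $\approx 1$ \emph{and} at a small angle from $-p$: the paper's Step~IV splits on the value of $x\cdot p$ (handling the regimes $x\cdot p\ge 3\e/2$ and $x\cdot p\le\e/2$ with the two poles alone, which sidesteps the weak $O(\sqrt r)$ localization), and in the remaining regime uses the point at angle $g(0)=f(0)<\pi/8$ from $-p$, where $\cos g(0)>9/10$ converts the constraint $x\cdot p>\e/2$ into the linear gain $\sqrt{1+(9/10)\e}-1\ge\e/3$. It also disposes of the transverse component of $x$ by a sign choice ($x\le 0$ versus $\sin g(0)\ge 0$) rather than by ``balancing'' inequalities. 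Your plan identifies neither the correct point nor this mechanism, so as written it proves only the $\e^2$ rate.

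For~\eqref{marcello3}, the far-case step $|E\Delta B(x)|\ge|B\Delta B(x)|-|E\Delta B|$ does not close: $|E\Delta B|$ contains the full crater, whose volume is of order $\H^{n-1}(K[-p,g(\e)])\cdot\e$, and since $g(\e)$ may be of order one this is comparable to $n|B|\,\e$ --- not small compared with $c_2(n)|x|$ for $|x|\sim\e$. The subtraction can therefore have the wrong sign. The paper avoids any error term by first reducing to centers $t\,e_n$ on the axis (the factor $1/3$ in~\eqref{marcello3} comes precisely from \cite[Lemma 5.2]{NoteNapoli}, which your plan does not invoke) and then intersecting with the fixed conical region $K=\{x_n>0,\ |x'|>c\}$, on which $E$ and $B$ coincide exactly, so that $|(B(t e_n)\Delta B)\cap K|$ is itself a lower bound for $|E\Delta B(t e_n)|$ with nothing to subtract. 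A smaller but real issue in your close case: $\int_{\e/2}^{\e-|x|}\H^{n-1}(K[p,f])\,dt\ge c\int_{\e/2}^{\e}\H^{n-1}(K[p,f])\,dt$ fails for general $f$ (take $f$ concentrated near $t=\e$), so the reduction to the integral appearing in~\eqref{marcello3} needs a different argument.
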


\begin{proof} We divide the proof in several steps.
\vskip10pt\par\noindent
{\bf Step I:  estimate on $\psi(s,t)$.} In this first step, we fix any $0\leq s \leq t\leq \e\leq 4/9$, and we aim to get the estimate~(\ref{le}) below for the geodetic radius $\psi(s,t)\in[0,\pi/2)$ defined by the identity
\[
\pa B_{1-\e+t}\setminus B_2\big((1+\e - s)\,p\big)=(1-\e+ t)\,K[-p,\psi(s,t)]\,.
\]
The problem is essentially two-dimensional, and in suitable coordinates we can set
\[
(1+\e - s)\,p=(0,1+\e -s)\,,
\]
and parameterize the generic point $q\in (1-\e+t)\,K[-p,\psi(s,t)]$ as
\[
q=(1-\e+t)(\sin\vphi,-\cos\vphi)\,,\qquad |\vphi|<\psi(s,t)\,,
\]
see Figure~\ref{fig: iso1}.
\begin{figure}
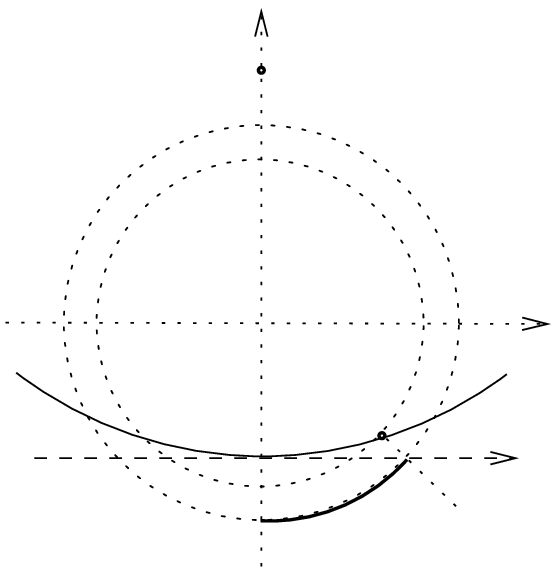\caption{\small{Step one in the proof of Lemma~\ref{thm: fg}.}}\label{fig: iso1}
\end{figure}
From the elementary inequality
\[
\cos\vphi\le 1-\left(\frac{2}{\pi}\right)^2\vphi^2\,,\quad\quad \vphi\in\left(-\frac{\pi}2,\frac{\pi}2\right)\,,
\]
we find that, setting $|\vphi| = \psi(s,t)$
\[\begin{split}
4&=\big|(1+\e -s)\,p-q\big|^2=\Big|\Big(-(1-\e +t)\sin\vphi,1+ \e -s+(1-\e +t)\cos\vphi\Big)\Big|^2\\
&= (1-\e +t)^2+ (1+ \e -s)^2+2(1+ \e -s)(1-\e +t)\cos\vphi\\
&\leq(1- \e +t)^2+(1+\e -s)^2+2(1+\e -s)(1-\e +t)\bigg(1-\left(\frac{2}{\pi}\right)^2\vphi^2\bigg)\\
&=(2+t-s)^2 -\frac{8}{\pi^2}(1+\e -s)(1-\e +t) \varphi^2\,,
\end{split}\]
that is,
\[
\frac{8}{\pi^2}\, (1+\e - s)(1-\e +t)\, \psi(s,t)^2 \leq (4+t-s)(t-s)\,.
\]
Taking into account that $1+\e -s\geq 1$, that $1-\e+t\geq 5/9$, and that $4+t-s\leq 40/9$, we conclude that
\begin{equation}\label{le}
\psi(s,t) \leq \pi\sqrt{t-s}\,.
\end{equation}
\vskip10pt\par\noindent
{\bf Step II: the set $E$ has diameter $2$.}
Given $p$, $\e$, $f$, $g$ and $E$ as in the statement of the theorem, we prove now that $\diam(E)=2$. The inequality $\diam(E)\geq 2$ is clear, since $\overline E$ contains both $(1+\e) p$ and $(1-\e)(-p)$. Hence, we concentrate on the opposite inequality.\par

Taking then two points $q_0,\, q_1\in E$, we need to establish that $|q_0-q_1|\leq 2$. Since this clearly holds if both points belong to $\overline B$, we assume without loss of generality that $q_0\in E\setminus \overline B$, so that $q_0\in (1+ \e -s)\,K[p,f(s)]$ for some $s\in(0,\e)$. If also $q_1\in E\setminus \overline B$, then we have $q_1\in(1+\e -\tilde s)\,K[p,f(\tilde s)]$ for some $\tilde s\in (0,\e)$, and in particular
\[
|q_0-q_1|\le |q_0-(1+\e - s)p|+|(1+\e - \tilde s)p-q_1|+|\tilde s-s|\le f(s)+f(\tilde s)+\e \leq 2\,,
\]
so we are done. Assuming, instead, that $q_1\in E\cap B$, it is useful to distinguish whether or not $|q_1|\leq 1 -\e + s$. If it is so, then of course we have $|q_0-q_1|\le |q_0|+|q_1|\le 2$, hence we are again done. Therefore, we are left to consider the last possible situation, namely when there exists some $t\in (s,\e)$ such that
\[
q_1\in(1-\e+ t)\,\Big(\pa B\setminus K[-p,g(t)]\Big)\,.
\]
Notice now that, calling $q'_0=q_0/|q_0|$ and $q'_1=q_1/|q_1|$, we have
\[
d(q'_0,q'_1) \leq d(q'_0,p) + d(p,q'_1) \leq f(s) + \pi - g(t) < \pi\,,
\]
where the last inequality directly comes from the definition~(\ref{g}) of $g$, because $t>s$ implies
\[
g(t) \geq f(s) + \pi \sqrt{t-s} > f(s)\,.
\]
As an immediate consequence, we have that $|q_0-q_1|$ is maximal if
\[
d(q'_0,q'_1)= \pi -\big(g(t)-f(s)\big)\,.
\]
Therefore, keeping in mind Step~I, proving $| q_0 - q_1|\leq 2$ is equivalent to show that
\[
g(t) - f(s) \geq \psi(s,t)\,,
\]
which in turn immediately follows by~(\ref{g}) and~\eqref{le}, since
\[
g(t)- f(s) \geq \pi \sqrt{t-s}\geq \psi(s,t)\,.
\]
We have then shown that $\diam(E)=2$.

\vskip10pt\par\noindent
{\bf Step III:  proof of~(\ref{marcello}).}
Since $\diam(E)=2$, we have that
\[\begin{split}
|E|\de(E)&=|B|-|E|=|B\setminus E|-|E\setminus B|\\
&=\int_0^{\e} (1-\e+t)^{n-1}\H^{n-1}\big(K[-p,g(t)]\big)-(1+ \e -t)^{n-1}\H^{n-1}\big(K[p,f(t)]\big)\,dt\\
&\leq\int_0^{\e} \H^{n-1}\big(K[-p,g(t)]\big)-\H^{n-1}\big(K[p,f(t)]\big)\,dt\,,
\end{split}\]
that is~\eqref{marcello}, as required.

\vskip10pt\par\noindent
{\bf Step IV:  proof of~(\ref{marcello2}).}
To show~\eqref{marcello2} we need to prove that, for every $q\in \R^n$, one has $r(q)\ge \e/3$, being
\[
r(q)=\inf\{r>0:E\subset B_{1+r}(q)\}\,.
\]
First of all, if $q\cdot p\ge(3/2)\e$, then since $-(1-\e)p\in \ov{E}$ we find
\[
1+r(q)\ge|q+(1-\e)p|\geq q\cdot p+(1-\e)\ge 1+\frac{\e}2\,,
\]
so~(\ref{marcello2}) is true. Similarly, if $q\cdot p\le \e/2$, then by the fact that $(1+\e)p\in\ov{E}$ we have
\[
1+r(q)\ge|q-(1+\e)p|\ge1+\e-\frac\e2=1+\frac\e2\,,
\]
and then again~(\ref{marcello2}) follows.\par
Let us finally assume that $\e/2 < q\cdot p < (3/2)\e$. Since $E$ is axially symmetric, in suitable planar coordinates we may assume that $q=(x,y)$, with $x\le 0$ and $y\in(\e/2,3\e/2)$. Since the point $(\sin\,g(0),-\cos \,g(0))$ belongs to $\overline E$, we get
\[\begin{split}
1+r(q)&\geq \Big|(x,y)-\Big(\sin\,g(0),-\cos\,g(0)\Big)\Big|
\geq	\Big|(0,y)-\Big(\sin\,g(0),-\cos\,g(0)\Big)\Big| \\
&\geq \sqrt{1+y^2+2y\cos\,g(0)}\ge \sqrt{1+\e\,\cos\,g(0)}\,.
\end{split}\]
And since $g(0)=f(0)<\pi/8$, we have $\cos\,g(0) > \cos(\pi/8)> 9/10$, so that
\[
r(q)\ge \sqrt{1+\frac{9}{10}\,\e}-1\ge\frac\e3\,.
\]
Hence, we have concluded to check~(\ref{marcello2}) also in the last case.

\begin{figure}[thbp]
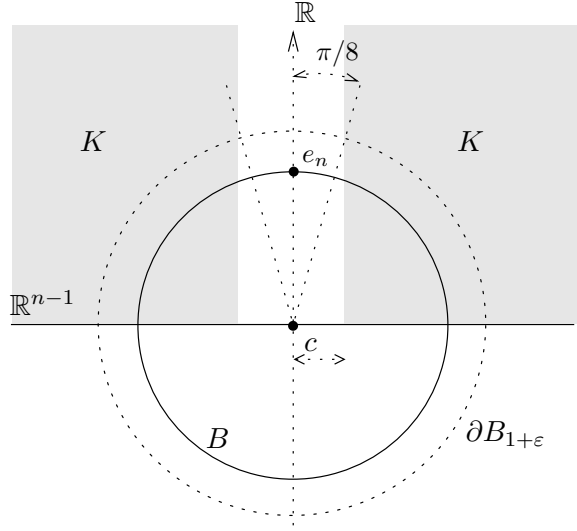
\caption{{\small There exists a constant $c$, depending on $\pi/8$ and on the fact that $\e<4/9$, such that $E$ agrees with $B$ on the dark region $K$.}}\label{fig: nonso}
\end{figure}

\vskip10pt\par\noindent
{\bf Step V: 
proof of~(\ref{marcello3}).}
We are left to the last estimate to show, namely, (\ref{marcello3}). To this end, it is convenient to write $x=(x',x_n)\in\R^n=\R^{n-1}\times\R$ and to set $p=e_n$. By the same argument used in the proof of \cite[Lemma 5.2]{NoteNapoli}, exploiting the fact that $E$ is axially symmetric with respect to the $e_n$-axis, we find that
\begin{equation}\label{librofrank}
\inf_{x\in\R^n}|E\Delta B(x)|\ge\frac13\inf_{t\in\R}\,|E\Delta B(t\,e_n)|\,.
\end{equation}
Since $f <  \pi/8$ on $(0,\e)$ and $\e < 4/9$, there exists a positive constant $c$ such that, if we set
\[
K=\{(x',x_n)\in\R^n:x_n>0\,, |x'|>c\}\,,
\]
then, see Figure~\ref{fig: nonso},
\[
(E\Delta B)\cap K=\emptyset\,.
\]
Evidently, there also exists a positive constant $c(n)$ (roughly speaking, a fraction of the $(n-1)$-dimensional measure of the spherical region $K\cap\pa B$), such that
\[
|(B(t\,e_n)\Delta B)\cap K|\geq 2c(n)\min\{|t|,1\}\,,\quad\quad\forall\, t\in\R\,.
\]
Therefore, if $|t|\ge\e/2$, then we deduce
\begin{equation}\label{caseA}
|E\Delta B(t\,e_n)|\ge c(n)\,\e\,.
\end{equation}
Instead, if $|t| <\e/2$, then we surely have
\[
\left\{x\in E:|x|>1+\frac\e2\right\}\subset E\Delta B(t\,e_n)\,,
\]
so that
\begin{equation}\label{caseB}
|E\Delta B(t\,e_n)| \ge \int_{\e/2}^\e(1+ \e -\tau)^{n-1}\H^{n-1}(K[p,f(\tau)])\,d\tau \geq \int_{\e/2}^\e\,\H^{n-1}(K[p,f(\tau)])\,d\tau\,.
\end{equation}
Putting together~(\ref{caseA}) and~(\ref{caseB}), and recalling~(\ref{librofrank}), we obtain~(\ref{marcello3}).
\end{proof}

\begin{example}[Optimality of~\eqref{main1} and of~\eqref{newfn} with $n=2$]
{\rm Let $0<\e<1/16$, and for $t\in(0,\e)$  consider the functions
\begin{align*}
f_\e (t)&=\frac{\pi}{ 8 \e} t\,, \\
g_\e(t)&=\max\{f_\e (s) + \pi \sqrt{t-s}:0\le s\le t\}\,.
\end{align*}
For every $t\in(0,\e)$ there exists $s(t)\in(0, t)$ such that
\begin{equation}\label{f+f-}\begin{split}
0&\leq g_\e(t)-f_\e(t)= \pi\sqrt{t - s(t)}-\frac{\pi}{8\e}(t-s(t))\\
&\leq\max\left\{\pi\sqrt{\sigma}-\frac{\pi}{8 \e}\,\sigma\,:0<\sigma<\e\right\}= 2\pi \,\e\,.
\end{split}\end{equation}
Fix $p\in \pa B$, and consider the family of sets $E_\e=E[\e,f,g,p]$ defined as in Lemma~\ref{thm: fg}.
By construction we have
\[
\int^{\e}_{\e/2}\H^{n-1}(K[p,f(s)])\,ds \ge \kappa(n) \e,
\]
for some constant $\kappa(n)$ depending on the dimension $n$ only. Then, by Lemma~\ref{thm: fg} we have $\diam(E_\e)=2$,
\begin{equation}\label{sharpe0}
r_{E_\e}^\text{out} \ge \frac{\e}{3},
\qquad\qquad \inf_{x\in\R^n}|E_\e \Delta B(x)| \geq \tilde\kappa(n)\,\e\,.
\end{equation}
Finally, by~\eqref{marcello} and~(\ref{f+f-}) the following  estimate for the isodiametric deficit of the $E_\e$ holds,
\[\begin{split}
|E| \, \de(E_\e) &\leq \int_0^{\e} \H^{n-1}(K[p,g_\e(t)])-\H^{n-1}(K[p,f_\e(t)])\,dt\\
&\leq C(n)\, \e\, \max_{t\in (0,\e)}
\big|g_\e(t)  - f_\e(t) \big| \leq 2\pi C(n)\, \e^2\,,
\end{split}\]
being $C(n)$ a constant depending only on the dimension. Finally, combining this last estimate with~\eqref{sharpe0}, we derive the optimality of~\eqref{main1}, as well as of~\eqref{newfn} for the case $n=2$.}
\end{example}

\begin{example}[Optimality of~\eqref{newfn} with $n\geq 4$]
{\rm Let $0<\e<4/9$, $0<\rho<\pi/8$ and for $t\in(0,\e)$  consider the functions
\begin{align*}
f_\e (t)&=\left\{
\begin{array}{ll}
\rho & \text{if } t=0\,,\\[2pt]
0 & \text{otherwise}\,.
\end{array}
\right.\\
g_\e(t)&=\max\{f_\e (s) + \pi \sqrt{t-s}:0\le s\le t\} = \rho+\sqrt t\,.
\end{align*}
Fix $p\in \pa B$, and consider the family of sets $E_\e=E[\e,f,g,p]$ defined as in Lemma~\ref{thm: fg}.
Then, by Lemma~\ref{thm: fg} we have $\diam(E_\e)= 2$, and $r_{E_\e}^\text{out} \geq \e/3$. By~\eqref{marcello} the following  estimate for the isodiametric deficit of the $E_\e$ holds,
\[\begin{split}
|E|  \de(E_\e) &\leq \int_0^{\e} \H^{n-1}(K[p,g_\e(t)])-\H^{n-1}(K[p,f_\e(t)])\,dt = \int_0^{\e} \H^{n-1}(K[p,g_\e(t)])\,dt \\
&\leq C(n) \int_0^{\e} (\rho + \sqrt t)^{n-1}dt \leq C(n) \big(1+ o(\rho)\big) \e^{(n+1)/2},
\end{split}\]
where $o(\rho)\to 0$ as $\rho\to 0$. By the arbitrariness of $\rho$ we conclude that~\eqref{newfn} is sharp also in the case $n\geq 4$.}
\end{example}

\begin{example}[Optimality of~\eqref{newfn} with $n=3$]
{\rm Let $n=3$, let $c< \pi/8$, and for $0<\e<e^{-2}$ let $\tilde f_\e, \tilde g_\e: (0,\e) \mapsto \R$ be defined by
\begin{align*}
\tilde f_\e(t) &= c\left(\frac{t}{\e}\right)^{|\log\e|}\,,\\
\tilde g_\e(t)&=\max\Big\{\tilde f_\e (s) + \pi \sqrt{t-s}:0\le s\le t\Big\}\,.
\end{align*}
In order to evaluate $\tilde g_\e(t)$, suppose that $0<s(t)<t$ is a critical point for the right hand side in the definition of $g_\e$. Hence, setting for simplicity $\e= e^{-l}$, i.e., $l=|\log\e|$, we readily obtain
\begin{equation}\label{eqlog1}
s(t)^{l-1} = \frac{\pi \e^l}{2 c l \sqrt{t-s(t)}}\,.
\end{equation}
Let $1>\theta>e^{-1/2}$ be fixed.  We claim that for every $t\ge \theta \e$, if $\e$ is small enough (depending on $\theta$) there exists a unique positive solution $0<s(t)<t$ of~\eqref{eqlog1} satisfying
\begin{itemize}
\item[(i)]  $\tilde g_\e(t)= \tilde f_\e(s(t)) + \pi \sqrt{t-s(t)}$;
\item[(ii)] $(\frac{t}{s(t)})^l\to 1$ as $\e\to 0$ (or, equivalently, as $l\to\infty$);
\item[(iii)] $\tilde f_\e(t) \ge \tilde g_\e(t)/2$.
\end{itemize}
In fact, consider the maximization problem which defines $g_\e(t)$. The maximum is surely not attained at $s=t$, because $\sqrt{t-s}$ has a negative infinite slop at $s=t$, while $\tilde f_\e$ is regular around $s=t$. To exclude that the maximum is at $s=0$, it is enough to check that $\pi\sqrt{t}<\tilde f_\e(t)$. And in turn, since the square root is concave while $\tilde f_\e$ is convex, we can limit ourselves to check that $\pi \sqrt{\theta \e} < \tilde f_\e(\theta \e)$. And finally, this follows by the fact that
\begin{equation}\label{dista0}
\frac{\sqrt{\theta \e}}{\tilde f_\e(\theta \e)}= c^{-1}\theta^{1/2} e^{-l/2} \theta^{-l}  = c^{-1} \theta^{1/2}  \big(\sqrt{e} \theta\big)^{-l} \to 0\,,
\end{equation}
where the limit is intended for $\e\to 0$, or equivalently, for $l\to \infty$. Summarizing, we have shown that the maximum in the definition of $\tilde g_\e(t)$ is attained at some $0<s(t)<t$, for every $t\geq \theta\e$. Thus, it is clear that $s(t)$ is a solution of~(\ref{eqlog1}) and that~(i) holds.\par
To show~(ii) we start underlining that, for any given $0<\bar\delta<1$, we must have
\begin{equation}\label{dista}
s(\theta \e) \ge \bar\delta \theta \e \qquad \text{ for $l$ large enough}.
\end{equation}
Indeed, arguing in a very similar way as in~(\ref{dista0}), it is enough to show that for any $0<\delta\leq \bar\delta$ one has $\tilde f_\e(\delta\theta\e)+ \pi \sqrt{\theta\e-\delta\theta\e} < \tilde f_\e(\theta\e)$, and in turn this easily follows because for $l\to \infty$ one has
\[
\frac{\tilde f_\e(\delta \theta \e) + \pi\sqrt{(1-\delta)\theta \e}}{\tilde f_\e(\theta \e)}
= \delta^l+ \pi c^{-1}\sqrt{(1-\delta)\theta} e^{-l/2} \theta^{-l} 
\leq \bar\delta^l+ \pi c^{-1}\,\big(\sqrt{e} \theta\big)^{-l} \to 0.
\]
Thus, (\ref{dista}) is established. Now, since $s(t)$ solves~\eqref{eqlog1} then we have
\begin{equation}\label{eq1}
t-s(t) =  \frac{\pi^2}{4 c^2 l^2} s(t)^{2-2l} \e^{2 l}\,,
\end{equation}
and hence
\begin{equation}\label{equality}
t=s(t)+ \frac{\pi^2}{4 c^2 l^2}  s(t)^{2-2l} \e^{2 l}\,.
\end{equation}
If $\bar\delta$ is fixed in such a way that $\bar\delta \theta > e^{-1/2}$, a quick inspection shows that, thanks to~(\ref{dista}), the function
\[
x \mapsto x+ \frac{\pi^2}{4 c^2 l^2}  x^{2-2l} \e^{2 l}
\]
is strictly increasing for $\bar\delta \theta \e<x<\e$. This observation implies at once that, if $\e$ is small enough, the critical point $s(t)$ is unique and $s(t)$ is increasing on $t\geq \theta\e$. Moreover, by~(\ref{equality}) and~(\ref{dista}) we easily get
\[
1\le \left( \frac{t}{s(t)}\right)^l \le  \bigg(1+   \Big(\big(\bar\delta\theta\big)^{1-2l} e^{-l}\Big)\bigg)^l \le  \Big(1+    \big(\bar\delta \theta\sqrt{e} \big)^{-2l}\Big)^l \to 1\,,
\]
so that~(ii) follows.\par
Finally, to prove~(iii), notice that it is equivalent to $\tilde f_\e/(\tilde g_\e - \tilde f_\e)\ge 1$. To prove this last inequality, first notice that, thanks to~(\ref{eq1}) and the fact that $s(t)$ is increasing, then $t-s(t)$ is decreasing. Hence, for any $t\geq \theta\e$ we have, also thanks to~(i),
\[
\tilde g_\e(t) - \tilde f_\e(t) = \tilde f_\e(s(t))  - \tilde f_\e(t) + \pi \sqrt{t-s(t)}
\leq \pi \sqrt{t-s(t)} \leq \pi \sqrt{\theta\e-s(\theta\e)}\,.
\]
As a consequence, for $l$  large enough, recalling (ii) and~\eqref{eqlog1} we have
\[\begin{split}
\frac{\tilde f_\e(t)}{\tilde g_\e(t) - \tilde f_\e(t)} 
&\geq \frac{ \tilde f_\e(\theta \e)}{\pi\sqrt{\theta \e - s(\theta \e)}}
= \frac{c\theta^l}{\pi\sqrt{\theta \e - s(\theta \e)}}
= \frac{2cl}{\pi^2} \theta^l \e^{-l} s(\theta \e)^{l-1}\\
&= \frac{2cl}{\pi^2} \theta^l \e^{-l} (\theta \e)^{l-1} \left( \frac{s(\theta \e)}{\theta \e} \right)^{l-1}
=  \frac{2 c l}{\theta \pi^2} \theta^{2l} e^{l}  \left( \frac{s(\theta \e)}{\theta \e} \right)^{l-1} \ge \left(\theta^2{e}\right)^l \ge 1\,,
\end{split}\]
so that also~(iii) follows.\par
We can finally pass to the construction of the nearly optimal set. Let $f_{(1-\theta)\e}:$  $(0,(1-\theta) \e) \to (0,\pi/8)$ be defined by
\[
f_{(1-\theta)\e}(t)=\tilde f_\e(t+\theta \e)\,,
\]
and, correspondingly, define $g_{(1-\theta)\e}: (0,(1-\theta) \e)\to (0,\pi/2)$ as
\[
g_{(1-\theta)\e}(t)= \max\Big\{f_{(1-\theta)\e}(s) + \pi \sqrt{t-s}: 0\le s\le t\Big\}\,.
\]
Notice that by construction
\begin{equation}\label{disco}
g_{(1-\theta)\e}(t) \leq \tilde g_\e(t+\theta \e)\qquad \text{ for every } t\in [0,{(1-\theta)\e}].
 \end{equation}
Fix $p\in \pa B$, and consider the family of sets
$
E_{(1-\theta)\e}=E[(1-\theta)\e,f_{(1-\theta)\e},g_{(1-\theta)\e},p],
$
defined according with Lemma~\ref{thm: fg}, with $\e$ replaced by ${(1-\theta)\e}$.
Then,  by Lemma~\ref{thm: fg} the set $E_{(1-\theta)\e}$ satisfies the diameter constraint $\diam(E_{(1-\theta)\e})= 2$, and
\[
r_{E_{(1-\theta)\e}}^\text{out} \ge \frac{ (1-\theta) \e}{3}.
\]
We are left to estimate the isodiametric deficit of $E_{(1-\theta)\e}$. In view of~\eqref{marcello}, \eqref{eqlog1} and~\eqref{disco}, recalling also ii) and iii)  we have
\[\begin{split}
\big| E_{(1-\theta)\e}\big|\delta(E_{(1-\theta)\e}) &\leq \int_0^{(1-\theta)\e}  \H^{2}\big(K[p,g_{(1-\theta)\e}(t)]\big) -
\H^{2}\big(K[p,f_{(1-\theta)\e}(t)]\big) \, dt\\
&\leq C \int_{\theta \e}^\e  (\tilde g_\e)(t)^2 - (\tilde f_\e)(t)^2 \, dt \leq
3C \int_{\theta \e}^\e  \tilde f_\e(t) (\tilde g_\e(t) - \tilde f_\e(t)) \, dt \\
&\leq
3C \int_{\theta \e}^\e c \e^{-l} t^l \pi \sqrt{t-s(t)} = \frac{3C \pi^2 }{2l}\int_{\theta \e}^\e  t^l s(t)^{1-l} \, dt\\
&\leq
\frac{ C'}{l}\int_{\theta \e}^\e  t  \left(\frac{t}{s(t)}\right)^{l-1} \, dt \leq { C''}\frac{\e^2}{|\log\e|} \,,
\end{split}\]
being $C,\, C'$ and $C''$ three constants.
We have thus constructed a family $F_\e=E_{(1-\theta)\e}$ of sets with $\diam(F_\e)=2$ and
\begin{equation}\label{megamarcello}
\limsup_{\e\to 0^+}\frac{\de(F_\e)\,|\log\e|}{\e^2}<\infty\,.
\end{equation}
Since by Theorem~\ref{thm: main2}
\[
K_{out}(3)\sqrt{\de(F_\e)|\log\de(F_\e)|}\geq r_{F_\e}^{\text{out}}\geq \frac{(1-\theta)\e}3\,,
\]
we conclude from~\eqref{megamarcello} that
\[
\limsup_{\e\to 0^+}\frac{\sqrt{\de(F_\e)|\log\de(F_\e)|}}{r_{F_\e}^{{\rm out}}}<\infty\,,
\]
thus getting the optimality of~(\ref{newfn}) also in the case $n=3$.}
\end{example}

\bigskip

{\bf Acknowledgment.} We thank Andrea Colesanti for his helpful suggestions. This work was supported by the GNAMPA-INDAM through the 2007-2008 research project {\it Disuguaglianze geometrico-funzionali in forma ottimale e quantitativa} and by the ERC Advanced Grant 2008 \emph{Analytic Techniques for Geometric and Functional Inequalities}.

\end{document}